\theoremstyle{plain}
\newtheorem{thm}{Theorem}
\newtheorem{lem}{Lemma}
\newtheorem{cor}{Corollary}
\theoremstyle{remark}
\newtheorem{rem}{Remark}
\newcommand\numberthis{\addtocounter{equation}{1}\tag{\theequation}}
\renewcommand{\v}[1]{\ensuremath{\mathbf{#1}}} 
\newcommand{\gv}[1]{\pmb{#1}} 
\newcommand{\mc}[1]{\mathcal{#1}} 
\newcommand{\mb}[1]{\mathbb{#1}} 
\newcommand{\mr}[1]{\mathrm{#1}}
\newcommand{\R}{\mb{R}}
\newcommand{\C}{\mb{C}}
\newcommand{\cF}{\mc{F}}
\newcommand{\dd}{\mr{d}}
\newcommand{\gvsig}{\gv{\sigma}}
\newcommand{\gvpi}{\gv{\pi}}
\DeclareMathOperator{\curl}{curl}
\DeclareMathOperator{\diver}{div}
\DeclareMathOperator{\re}{Re}
\begin{document}
\title[]{Non-linear Schr\"{o}dinger Equation in a Uniform Magnetic Field}
\author{T. F. Kieffer and M. Loss}
\address{School of Mathematics, Georgia Institute of Technology, Atlanta GA, USA.}

\date{October 24, 2020}

\begin{abstract}
The aim of this paper is to study, in dimensions $2$ and $3$, the pure-power non-linear Schr\"{o}dinger equation with an external uniform magnetic field included. In particular, we derive a general criteria on the initial data $\psi_0 \in H_{\!\v{A}}^1$ and the power of the non-linearity so that the corresponding solution blows up in finite time, and we show that the time for blow up to occur decreases as the strength of the magnetic field increases. In addition, we also discuss some observations about Strichartz estimates in $2$ dimensions for the Mehler kernel, as well as similar blow-up results for the non-linear Pauli equation. 
\end{abstract}

\maketitle
 \vskip .3 true in
\noindent
{\it We dedicate this work to Ari who has  influenced our research in many and fruitful ways.}




\section{Introduction}\label{sec:intro}

The importance of Virial Identities in the theory of partial differential equations can hardly be overstated.
They can be used for proving absence of bound states in Schr\"odinger operators (see \cite{RSIV}) and
the well known Pohozaev identity is a Virial Identity in disguise. What is maybe more surprising is that
a Virial Identity is an entry point to scattering theory and has lead to what is now called `Mourre Estimates'.
A very readable introduction can be found in \cite{CFKS1987}. Another application, and this is the topic of this paper, is the study of blow up of solutions for the non-linear Schr\"odinger equation
\begin{align}\label{eq:NLS}
\left\lbrace  \begin{array}{l}
i \partial_t \psi = - \Delta \psi + \mu | \psi |^{p-1} \psi \\
\psi (0) (\v{x}) = \psi_0 (\v{x}) ,
\end{array} \right. 
\end{align}
due to Glassey \cite{glassey1977}. This system has a conserved energy
\begin{align}\label{def:NLS_energy_nofield}
E [\psi (t)] = \| \nabla \psi (t) \|_{L^2 (\R^d)}^2 + \frac{2 \mu}{p+1} \| \psi (t) \|_{L^{p+1} (\R^d)}^{p+1} ,
\end{align}
and one computes with Glassey
\begin{align}\label{eq:glassey_virial}
\frac{1}{4} \frac{\dd^2}{\dd t^2} \langle \psi , |\v{x}|^2 \psi \rangle_{L^2 (\R^d)} = 2 \| \nabla \psi \|_{L^2 (\R^d)}^2 + \mu  d \frac{p-1}{p+1} \| \psi \|_{L^{p+1} (\R^d)}^{p+1} .
\end{align}
Thus, when $p \ge 1 + 4/d$ and $\mu<0$, then
\begin{align*}
\frac{1}{4} \frac{\dd^2}{\dd t^2} \langle \psi , |\v{x}|^2 \psi \rangle_{L^2 (\R^d)} \leq 2 E [\psi_0] .
\end{align*}
If the energy is strictly negative, then the function $  \langle \psi , |\v{x}|^2 \psi \rangle_{L^2 (\R^d)}(t)$ is strictly concave and must hit zero at some time $T$ provided that the solution were to exist up to that point in time. Thus, the solution ceases to exist after a finite time.

The initial value problem (\ref{eq:NLS}) has been studied extensively by many authors; see, for example, \cite[Chapter 3]{tao-dispersive2006} or \cite{Cazenave89} for detailed reviews. Local well-posedness in $H^1 (\R^d)$ of (\ref{eq:NLS}) for $1 < p < 1 + 4/(d-2)$ essentially follows from a fixed point argument utilizing the Strichartz estimates for the Schr\"{o}dinger unitary group $e^{i t \Delta}$. Such estimates read
\begin{align}\label{eq:strichartz_estimates}
\| e^{i t \Delta} f \|_{L^q_t L^r (\R \times \R^d)} \leq C \| f \|_{L^2 (\R^d)} ,
\end{align}
for all $f \in L^2 (\R^d)$ and all \textit{Schr\"{o}dinger admissible} exponents $(q,r)$ satisfying
\begin{align*}
q \in [2 , \infty], \hspace{1cm} \frac{2}{q} = d \left( \frac{1}{2} - \frac{1}{r} \right) , \hspace{1cm} (q,r,d) \neq (2 , \infty , 2) .
\end{align*}
Here $C > 0$ is a constant depending on the numbers $(q,r,d)$.  Global well-posedness for $p < 1+4/d$ follows along familiar lines using the energy conservation and the Gagliardo-Nirenberg inequalities.

The aim of this paper is to generalize the aformentioned blow-up result to the pure-power NLS equation with a uniform magnetic field included, referred to as the constant field non-linear magnetic Schr\"{o}dinger (NLMS) equation. In particular, we derive a general criteria on the initial data $\psi_0 \in H_{\!\v{A}}^1$ so that finite time blow-up occurs when $p \geq 1 + 4/d$. It is easy to see by a scaling argument that the length of the time interval of local existence is inversely proportional to the strength of the magnetic field  (see Theorem \ref{thm:blowup_NLMS}). Since Strichartz estimates are at the heart of some of the estimates for the nonlinear Schr\"odinger equation we add some simple and, we think, interesting remarks about Strichartz estimates in $2$ dimensions for the Mehler kernel (see Theorem \ref{thm:strichartz_identity}). We end the paper with a few simple remarks concerning blow-up results for the non-linear Pauli equation (see Theorem \ref{thm:local_wellposed_Pauli} and \ref{thm:blowup_NLP}). 

A resolution of the problem of finite time blow-up for the constant field NLMS equation was already claimed by G.~Ribeiro in 1990 \cite{ribeiro91} in $3$ dimensions, and subsequently generalized by A. Garcia in 2012 \cite{garcia2011}. However, we believe that our approach brings a new perspective to this problem as we observe that a crucial feature is the existence of an additional conserved quantity related to the angular momentum (see (\ref{def:blowup_functional_schr})). Using this additional conserved quantity, a more exact, as compared to the results of \cite{ribeiro91, garcia2011}, virial identity for the second time derivative of the expectation value of $|\v{x}|^2$ is derived (see (\ref{eq:constantfield_xsquare_gauge_independent})). This new identity yields a different sufficient condition on the initial data which guarantees finite time blow-up, and may also be solved exactly in $2$ dimensions with critical power for the non-linearity (see (\ref{eq:constantfield_explicitsolution})). 

The paper is organized as follows. In \S\ref{sec:NLMS} we introduce the NLMS equation, discussing local/global well-posedness and Strichartz estimates in \S\ref{sub:strichartz}, and study finite time blow-up in \S\ref{sub:blowup} and \S\ref{sub:virial}. We conclude with \S\ref{sec:NLP} which generalizes the results concerning the NLMS equation to the so-called non-linear Pauli equation.

{\bf Acknowledgment: }The authors are grateful to Jan-Philip Solovej for many discussions and for the hospitality at the University of Copenhagen. This work was partially funded by NSF grant DMS-1856645.

\section{Non-linear Magnetic Schr\"{o}dinger Equation}\label{sec:NLMS}

The Cauchy problem for the NLMS equation in $d \geq 2$ space dimensions\footnote{We only discuss (\ref{eq:NLMS}) in dimension $d \geq 2$ because in dimension $d = 1$ it is always possible to pass from $(\v{p} + \v{A})^2$ to the free Hamiltonian $- \Delta$ via a gauge transformation.} reads
\begin{align}\label{eq:NLMS}
\left\lbrace  \begin{array}{l}
i \partial_t \psi = (\v{p} + \v{A})^2 \psi + \mu | \psi|^{p-1} \psi \\
\psi (0 , \v{x}) = \psi_0 (\v{x}) .
\end{array} \right. 
\end{align}
where $\v{A}$ is the magnetic vector potential.
 We typically consider (\ref{eq:NLMS}) as an initial value problem in the Hilbert space
\begin{align*}
H_{\!\v{A}}^1 (\R^d ; \C) = \{ f \in L^2 (\R^d ; \C) ~ : ~ (\v{p} + \v{A}) f \in L^2 (\R^d ; \C^d) \} ,
\end{align*}
which is equipped with the norm $\| (\v{p} + \v{A}) f \|_{L^2 (\R^d)}$, and we will always assume the operator $(\v{p} + \v{A})^2$ is essentially self-adjoint on $L^2 (\R^d ; \C)$ with domain
\begin{align*}
H_{\!\v{A}}^2 (\R^d ; \C) = \{ f \in H_{\!\v{A}}^1 (\R^d ; \C) ~ : ~ (\v{p} + \v{A})^2 f \in L^2 (\R^d ; \C) \} .
\end{align*} 
If $\v{A} \in L^4(\R^d;\R^d)$ and $\diver{\v{A}} \in L^2(\R^d)$, then by the Leinfelder-Simader theorem \cite{LeiSim}, $(\v{p}+\v{A})^2$ is essentially self-adjoint on $C^\infty_c(\R^d)$.

The total energy associated with the NLMS equation (\ref{eq:NLMS}) is\footnote{The subscript "S" is placed on certain quantities to distinguish them from similar quantities that come up in the discussion of the non-linear Pauli equation in \S\ref{sec:NLP}.}
\begin{align}\label{def:NLMS_energy}
E_{\mr{S}} [\psi , \v{A}] (t) =  T_{\mr{S}} [\psi (t) , \v{A}] + \frac{2 \mu}{p+1} \| \psi (t) \|_{L^{p+1} (\R^d)}^{p+1} ,
\end{align}
where $T_{\mr{S}} [\psi , \v{A}] = \| (\v{p} + \v{A}) \psi \|_{L^2 (\R^d)}^2$. Note that $E_{\mr{S}} [\psi , \v{0}] = E [\psi]$, where on the right hand side appears (\ref{def:NLS_energy_nofield}). Often the magnetic vector potential $\v{A}$ is understood, and thus we will usually suppress the $\v{A}$-dependence of $E_{\mr{S}}$, simply writing $E_{\mr{S}} [\psi]$, and likewise for $T_{\mr{S}}$. It is straightforward to check via differentiation that, at least formally, $\| \psi (t) \|_{L^2 (\R^d)}^2$ and the total energy (\ref{def:NLMS_energy}) are conserved along the flow generated by (\ref{eq:NLMS}). 

\begin{rem}
It is common to apriori "fix a gauge" for the vector potential $\v{A}$. When a gauge is chosen, we will always pick the \textit{symmetric gauge} $\v{A} = B \v{x}_{\perp} / 2$, where
\begin{align*}
\v{x}_{\perp} = \left\lbrace \begin{array}{ll}
(- x_2 , x_1) , & d = 2 \\
(-x_2 , x_1 , 0) , & d = 3 .
\end{array} \right.
\end{align*}
However, unless otherwise specified, we will generally not fix the gauge for reasons that will become clear later.
\end{rem}

\subsection{Strichartz Estimates and Well-posedness}\label{sub:strichartz}

Local well-posedness for the NLMS equation (\ref{eq:NLMS}) with initial data in $H^1_{\!\v{A}} (\R^3)$ first appeared in \cite{cazenave_esteban_88}. There the authors consider more general non-linearities and external potentials, and also study the orbital stability of the ground state associated with (\ref{eq:NLMS}). Local well-posedness results for uniform magnetic fields in dimensions $d \geq 2$, and for more general magnetic fields, may be found in \cite[Theorem 2.2]{garcia2011}, and are proved using the Strichartz estimates of \cite{ancona_fanelli_08, ancona_fanelli_10}. For the sake of completeness, we state the local well-posedness result of \cite{cazenave_esteban_88} for the pure-power non-linearity case and no external potential.
\begin{thm}[Cazenave-Esteban, 1988 \cite{cazenave_esteban_88}]\label{thm:CE88}
Let $d = 3$, $\mu \in \R$, $p \in (1 , 5)$, and $\v{A} = B \v{x}_{\perp} / 2$. For all $\psi_0 \in H_{\!\v{A}}^1$ we have the following.
\begin{enumerate}
\item There exists a unique maximal solution $\psi \in C([0,T_*) , H_{\!\v{A}}^1) \cap C^1 ([0,T_*) , H_{\!\v{A}}^{-1})$ of (\ref{eq:NLMS}). If $T_* < \infty$, then $\| (\v{p} + \v{A}) \psi (t) \|_{L^2 (\R^d)} \rightarrow \infty$ as $t \uparrow T_*$.
\item The mapping $\psi_0 \mapsto T_* (\psi_0)$ is lower semi-continuous and, if $t \in [0 , T_* (\psi_0))$ and $( \phi_n )_{n \geq 1} \subset H^1_{\!\v{A}}$ converges to $\psi_0$ as $n \rightarrow \infty$, in $H_{\!\v{A}}^1$, then the corresponding sequence of solutions $(\psi_n)_{n \geq 1}$ to (\ref{eq:NLMS}) verify $\psi_n \rightarrow \psi$ as $n \rightarrow \infty$, in $C([0,t] , H_{\!\v{A}}^1)$.
\item If $\psi_0 \in H^2_{\!\v{A}}$, then $\psi \in C([0,T_*) , H_{\!\v{A}}^2) \cap C^1 ([0,T_*) , L^2)$.
\item $\| \psi (t) \|_{L^2 (\R^d)} = \| \psi_0 \|_{L^2 (\R^d)}$ and $E_{\mr{S}} [\psi (t) , \v{A}] = E_{\mr{S}} [\psi_0, \v{A}]$. 
\end{enumerate}
\end{thm}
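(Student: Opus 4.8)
The plan is to adapt the classical Kato--Cazenave--Weissler fixed-point scheme to the magnetic propagator $U_{\v{A}}(t) = e^{-it(\v{p}+\v{A})^2}$. The starting point is that, in the symmetric gauge with a constant field of strength $B$, the integral kernel of $U_{\v{A}}(t)$ is the magnetic Mehler kernel, which obeys a dispersive bound $\|U_{\v{A}}(t)\|_{L^1\to L^\infty}\lesssim |\sin(Bt)|^{-d/2}$ on the interval $|t|<\pi/B$; by the $TT^*$ machinery this yields local-in-time Strichartz estimates $\|U_{\v{A}}(t)f\|_{L^q_tL^r}\lesssim\|f\|_{L^2}$ together with the retarded inhomogeneous estimates on any interval $[0,T]$ with $T<\pi/B$, for all Schr\"odinger-admissible $(q,r)$ (this is the content of the Strichartz estimates from \cite{ancona_fanelli_08, ancona_fanelli_10} referred to in the text). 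Since $(\v{p}+\v{A})$ commutes with $U_{\v{A}}(t)$, the same estimates hold at the level of $H^1_{\v{A}}$.

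Next I would write Duhamel's formula $\psi(t)=U_{\v{A}}(t)\psi_0-i\mu\int_0^t U_{\v{A}}(t-s)\,|\psi(s)|^{p-1}\psi(s)\,ds$ and set up a contraction on a ball of radius $2\|U_{\v{A}}(\cdot)\psi_0\|_{X_T}$ in the space $X_T=C([0,T],H^1_{\v{A}})\cap L^q([0,T],W^{1,r}_{\v{A}})$ for a well-chosen admissible pair $(q,r)$ with $r$ slightly below $6$, so that $p-1<4/(d-2)$ accommodates the whole subcritical window $p\in(1,5)$ in $d=3$. The key nonlinear input is the pointwise diamagnetic-type inequality $|(\v{p}+\v{A})(|\psi|^{p-1}\psi)|\le p\,|\psi|^{p-1}\,|(\v{p}+\v{A})\psi|$ (the $\v{A}$-contributions cancel the ones coming from the ordinary chain rule), which reduces the magnetic estimate to the classical H\"older estimate $\||\psi|^{p-1}(\v{p}+\v{A})\psi\|_{L^{r'}}\lesssim\|\psi\|_{L^{\rho}}^{p-1}\|(\v{p}+\v{A})\psi\|_{L^r}$; the intermediate Lebesgue norm is then controlled by $\|\psi\|_{H^1_{\v{A}}}$ via the usual Sobolev embedding, using once more the diamagnetic inequality $|\nabla|\psi||\le|(\v{p}+\v{A})\psi|$. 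A H\"older gain $T^\theta$ with $\theta>0$ in the time integration makes the map a contraction for $T$ small depending only on $\|\psi_0\|_{H^1_{\v{A}}}$ (and $B$), giving existence and uniqueness on a short interval; iterating, with the existence time bounded below by a function of $\|\psi(t)\|_{H^1_{\v{A}}}$, yields the maximal solution and the blow-up alternative of part (1).

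For part (2), lower semicontinuity of $T_*$ and the convergence $\psi_n\to\psi$ in $C([0,t],H^1_{\v{A}})$ follow from the uniform contraction estimates: cover any compact subinterval $[0,t]$ of $[0,T_*(\psi_0))$ by finitely many short intervals on each of which the solution map is Lipschitz in the $H^1_{\v{A}}$-data, and propagate the bound. For part (3), given $\psi_0\in H^2_{\v{A}}$ one differentiates (\ref{eq:NLMS}) in $t$: $v:=\partial_t\psi$ solves a linear magnetic Schr\"odinger equation with potential $\sim|\psi|^{p-1}$ and $v(0)=-i((\v{p}+\v{A})^2\psi_0+\mu|\psi_0|^{p-1}\psi_0)\in L^2$; a Strichartz/Gronwall argument gives $v\in C([0,T_*),L^2)$, whence $(\v{p}+\v{A})^2\psi=iv-\mu|\psi|^{p-1}\psi\in C([0,T_*),L^2)$, i.e. $\psi\in C([0,T_*),H^2_{\v{A}})$. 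Finally, for part (4), with $H^2_{\v{A}}$ data the solution is regular enough that pairing (\ref{eq:NLMS}) with $\bar\psi$ and with $\partial_t\bar\psi$ and taking real parts rigorously gives $\frac{d}{dt}\|\psi\|_{L^2}^2=0$ and $\frac{d}{dt}E_{\mr{S}}[\psi,\v{A}]=0$; the conservation laws for general $\psi_0\in H^1_{\v{A}}$ then follow by approximating in $H^1_{\v{A}}$ by $H^2_{\v{A}}$ data and using the continuous dependence from part (2).

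The main obstacle is the nonlinear term. Two points need care: first, establishing and exploiting the magnetic chain-rule inequality for $(\v{p}+\v{A})(|\psi|^{p-1}\psi)$ when $p-1$ is not an even integer (a standard but slightly technical finite-difference argument, since $z\mapsto|z|^{p-1}z$ is only $C^1$); and second, choosing the admissible pair so that the H\"older exponents close up uniformly over $p\in(1,5)$ --- the case $p$ near $5$ is the tight one and forces $r$ near the endpoint $6$, which is permitted in $d=3$. The $H^2$ bootstrap for $p$ close to $5$ is likewise delicate, since $|\psi|^{p-1}\psi$ need not lie in $L^2$ from the $H^1$ embedding alone, so the bootstrap must be run in a Strichartz space rather than directly in $L^\infty_tL^2$; this is routine but is where most of the bookkeeping lives.
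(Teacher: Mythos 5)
Your overall architecture is exactly the route the paper points to for this quoted result (the paper does not reprove Theorem \ref{thm:CE88}; it cites \cite{cazenave_esteban_88} and notes the proof rests on the Mehler-kernel dispersive bound (\ref{eq:US-Lp_estimate}) and the Strichartz estimates (\ref{eq:homogeneous_Strichartz_S_constant_field_3D})--(\ref{eq:nonhomogeneous_Strichartz_S_constant_field_3D}) fed into a Kato-type fixed point): dispersive estimate, local-in-time Strichartz, contraction in an $H^1_{\v{A}}$-valued Strichartz space, then the standard arguments for the blow-up alternative, continuous dependence, $H^2_{\v{A}}$ persistence and the conservation laws. Your magnetic chain rule $|(\v{p}+\v{A})(|\psi|^{p-1}\psi)|\le p\,|\psi|^{p-1}|(\v{p}+\v{A})\psi|$ is correct (the gauge terms cancel exactly), and invoking the diamagnetic inequality to get the Sobolev embedding for $|\psi|$ is the right move; your dispersive bound $|\sin(Bt)|^{-3/2}$ is a legitimate (if slightly lossy) majorant of the true $|t|^{-1/2}|\sin(Bt)|^{-1}$ behaviour on $|t|<\pi/B$.

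There is, however, one step that is false as written: $(\v{p}+\v{A})$ does \emph{not} commute with $U_{\v{A}}(t)$ when $B\neq 0$. Writing $\gvpi=\v{p}+\v{A}$, one has $[\pi_1,\pi_2]=-iB$, hence $[\gvpi^2,\pi_j]=\pm 2iB\,\pi_k\neq 0$ for $j,k\in\{1,2\}$, so the transfer of the $L^2$-level Strichartz estimates to the $H^1_{\v{A}}$ level cannot be justified by commutation, and this is precisely the estimate your contraction needs. The conclusion is still true, but the correct mechanism is the Heisenberg evolution: $U_{\v{A}}(-t)\,\pi_j\,U_{\v{A}}(t)$ is an explicit rotation (by angle $2Bt$) of the pair $(\pi_1,\pi_2)$, while $\pi_3=p_3$ is untouched. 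Thus $\pi_j U_{\v{A}}(t)\psi_0=\sum_k R_{jk}(2Bt)\,U_{\v{A}}(t)\pi_k\psi_0$ with $|R_{jk}|\le 1$, and the homogeneous estimate for $\gvpi\psi_0$ follows componentwise; for the Duhamel term use $R(2B(t-s))=R(2Bt)R(-2Bs)$, so the $t$-dependent orthogonal factor exits the retarded estimate and the $s$-dependent one is absorbed into the inhomogeneity, whose dual Strichartz norm is comparable to that of $\gvpi F$. With this replacement your fixed-point argument closes, and parts (2)--(4) then go through as you describe (the $H^2_{\v{A}}$ bootstrap via the time-differentiated equation and the density argument for the conservation laws are standard).
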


\begin{rem}
Though Theorem \ref{thm:CE88} is proved in the symmetric gauge, by applying a gauge transformation we can obtain a similar Theorem in other gauges. Therefore, the restriction to the symmetric gauge is one of convenience, not necessity. 
\end{rem}

As will be discussed below, Theorem \ref{thm:CE88} also applies in two dimensions. Global existence of $H^1_{\!\v{A}} (\R^d)$-solutions to (\ref{eq:NLMS}) for $p < 1 + 4/d$ and $\mu < 0$ follows from the diamagnetic inequality combined with the same estimates that were applied to (\ref{eq:NLS}) to obtained global existence there.\footnote{For the defocusing case $\mu > 0$, global existence follows for any $1 < p < 1 + 4/(d-2)$ and $\psi_0 \in H^1_{\!\v{A}} (\R^d)$. This follows from the conservation of energy: $\| (\v{p} + \v{A}) \psi (t) \|_{L^2 (\R^d)}^2 \leq E_{\mr{S}} [\psi_0 , \v{A}]$.}  Indeed, the diamagnetic inequality reads
\begin{align}\label{eq:diamagnetic}
| \nabla |\psi| (\v{x}) | \leq | (\v{p} + \v{A}) \psi (\v{x}) | , \hspace{5mm} \text{for a.e.} ~ \v{x} \in \R^d .
\end{align}
From this, the Gagliardo-Nirenberg inequality, and the conservation charge and energy for (\ref{eq:NLMS}), we have the following bound on the kinetic energy:
\begin{align*}
\| (\v{p} + \v{A}) \psi \|_{L^2 (\R^d)}^{2} \leq |E_{\mr{S}} [\psi_0]| + \frac{2C_{p+1}}{p+1} \| (\v{p} + \v{A}) \psi \|_{L^2 (\R^d)}^{d(p-1)/2} .
\end{align*}
Hence, if $p < 1 + 4/d$, then a uniform bound on $\| (\v{p} + \v{A}) \psi \|_{L^2 (\R^d)}$ follows. According to the blow-up alternative of Theorem \ref{thm:CE88}, we have global well-posedness of the Cauchy problem (\ref{eq:NLMS}) for $1 < p < 1 + 4/d$. 

The proof of Theorem \ref{thm:CE88} relies on homogeneous and non-homogeneous Strichartz estimates for the unitary time evolution $U_{\mr{S}} (t) = \exp{ \{- i t (\v{p} + \v{A})^2 \}}$ when $\v{A} = B \v{x}_{\perp} / 2$. To discuss these estimates in detail, we first write
\begin{align}\label{eq:unitary_propagator_schr_constant_field}
U_{\mr{S}} (t) = \left\lbrace \begin{array}{ll}
M(t) , & d = 2 \\
e^{i t \partial_3^2} M (t) , & d = 3 ,
\end{array} \right.
\end{align}
where $M (t)$ is the operator given by
\begin{align}\label{eq:mehler_operator}
M(t) = \exp{ \left\lbrace - i t \left[ \left( p_1 - \frac{B}{2} x_2 \right) + \left( p_2 + \frac{B}{2} x_1 \right) \right] \right\rbrace } .
\end{align}
It is possible to write the integral kernel of the operator $M(t)$ explicitly; see, for example, \cite{AHS78} for a derivation. Known as the \textit{Mehler kernel}, and denoted by the same symbol $M (t) : \R^4 \rightarrow \C$, it reads
\begin{align}\label{eq:mehler_kernel}
M (\v{x} , \v{y}, t) = \frac{B}{4 \pi \sin{(Bt)}} \exp{\left\lbrace \frac{B}{4i} \left( \cot{(Bt)} |\v{x} - \v{y}|^2 - 2 \v{x} \wedge \v{y} \right) \right\rbrace } ,
\end{align}
where $\v{x} \wedge \v{y} = x_1 y_2 - x_2 y_1$. Using the representation (\ref{eq:mehler_kernel}) one has the following $L^p$-estimate on the unitary time evolution for $d = 3$:
\begin{align}\label{eq:US-Lp_estimate}
\| U_{\mr{S}} (t) \psi_0 \|_{L^p (\R^3)} \leq \left( \frac{|B|}{\sqrt{|t|} |\sin{(Bt)}|} \right)^{1 - \frac{2}{p}} \| \psi_0 \|_{L^{p'} (\R^3)} ,
\end{align}
for all $\psi_0 \in L^{p'} (\R^3)$. We note that the time decay in (\ref{eq:US-Lp_estimate}) is due to the free motion in the third direction. Using (\ref{eq:US-Lp_estimate}) it is then possible to show
\begin{align}\label{eq:homogeneous_Strichartz_S_constant_field_3D}
\| U_{\mr{S}} \psi_0 \|_{L^q_t L^r_x ([0,T] \times \R^3)} \leq C_1 \| \psi_0 \|_{L^2 (\R^3)}  , 
\end{align}
and
\begin{align}\label{eq:nonhomogeneous_Strichartz_S_constant_field_3D}
\left\lVert \int_0^t U_{\mr{S}} (t-\tau) \psi (\tau) \dd \tau \right\rVert_{L^q_t L^r_x ([0,T] \times \R^3)} \leq C_2 \| \psi \|_{L^{\tilde{q}'}_t L_x^{\tilde{r}'} ([0,T] \times \R^3)},
\end{align}
for all Schr\"{o}dinger admissible (see (\ref{eq:strichartz_estimates})) $(q,r)$ and $(\tilde{q} , \tilde{r})$ with $d = 3$, and where $C_1 > 0$ depends only on $r$ and $T$, and $C_2 > 0$ depends only on $r$, $\tilde{r}$, and $T$. 

The authors in \cite{cazenave_esteban_88} prove Theorem \ref{thm:CE88} for $d = 3$ using estimates (\ref{eq:homogeneous_Strichartz_S_constant_field_3D}) and (\ref{eq:nonhomogeneous_Strichartz_S_constant_field_3D}). However, using the following Theorem regarding Strichartz estimates for $U_{\mr{S}} (t) \equiv M (t)$ for the two dimensional case, we may easily prove Theorem \ref{thm:CE88} in the $d = 2$ case as well.  Note, the magnetic evolution is periodic with period $\pi/B$ which is essentially the Larmor period. Thus, there is no decay in time and one has to consider the evolution for $0\le t \le \pi/B$. The following may be somewhat surprising.
\begin{thm}\label{thm:strichartz_identity}
For the unitary propagator $U_{\mr{S}} (t) \equiv M (t)$ given by (\ref{eq:mehler_operator})-(\ref{eq:mehler_kernel}) we have the identity
\begin{align}\label{eq:unitary_propagator_identity}
\| M (t) \psi_0 \|_{L_t^q L_x^r ( (0 , \pi / B) \times \R^2 )} = (4 \pi)^{1 - \frac{4}{q}} \| e^{i t \Delta} \psi_0 \|_{L^q_t L^r_x (\R \times \R^2)} ,
\end{align}
for all $\psi_0 \in L^2 (\R^2)$ and all Schr\"{o}dinger admissible exponents $(q,r)$.
\end{thm}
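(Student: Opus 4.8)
The plan is to prove a pointwise \emph{lens transform} (pseudoconformal) identity that writes $M(t)\psi_0$ in terms of the free propagator $e^{is\Delta}\psi_0$ evaluated at a dilated and rotated spatial argument, at a new time $s = s(t)$; the asserted equality of mixed-norms then drops out after a change of variables in the space integral and in the time integral, with the Schr\"odinger admissibility relation being precisely what makes the two Jacobian weights cancel.

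First I would unfold the Mehler kernel (\ref{eq:mehler_kernel}). Expanding $|\v{x}-\v{y}|^2 = |\v{x}|^2 - 2\v{x}\cdot\v{y} + |\v{y}|^2$ and using the elementary identity $\cot(Bt)\,\v{x}\cdot\v{y} + \v{x}\wedge\v{y} = \sin(Bt)^{-1}\,(R_{Bt}\v{x})\cdot\v{y}$, where $R_\theta$ denotes rotation by the angle $\theta$, one pulls the purely $\v{x}$-dependent Gaussian out of the integral; $M(t)\psi_0(\v{x})$ then becomes a constant times a Fourier transform of the Gaussian-modulated datum $e^{-\frac{iB}{4}\cot(Bt)|\v{y}|^2}\psi_0(\v{y})$, evaluated at a point proportional to $\sin(Bt)^{-1}R_{Bt}\v{x}$. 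Performing the identical manipulation on the free kernel $(4\pi i t)^{-1}e^{i|\v{x}-\v{y}|^2/(4t)}$ and matching the coefficient of $|\v{y}|^2$ in the two Gaussian modulations forces the time substitution $s = s(t) = \tan(Bt)/B$; matching the Fourier arguments then fixes the spatial dilation to be $\cos(Bt)^{-1}$. The bookkeeping yields, for $t\in(0,\pi/B)\setminus\{\pi/2B\}$,
\begin{align*}
M(t)\psi_0(\v{x}) = c(t)\,e^{i\phi(t)|\v{x}|^2}\,\bigl(e^{is(t)\Delta}\psi_0\bigr)\!\left(\frac{R_{Bt}\v{x}}{\cos(Bt)}\right), \qquad |c(t)| = \frac{K}{|\cos(Bt)|},
\end{align*}
with $\phi(t)\in\R$ a harmless quadratic phase and $K$ the (constant) ratio of the two kernel prefactors; taking absolute values removes the phase.

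Next I would integrate in space: for fixed $t$, substituting $\v{u} = R_{Bt}\v{x}/\cos(Bt)$ in $\int_{\R^2}|M(t)\psi_0(\v{x})|^r\,\dd\v{x}$ — the rotation contributing unit Jacobian, the dilation contributing $|\cos(Bt)|^2$ — gives $\|M(t)\psi_0\|_{L^r(\R^2)} = K\,|\cos(Bt)|^{2/r - 1}\,\|e^{is(t)\Delta}\psi_0\|_{L^r(\R^2)}$. Then I would integrate in time. The map $t\mapsto s(t)$ is an increasing bijection of $(0,\pi/2B)$ onto $(0,\infty)$ and of $(\pi/2B,\pi/B)$ onto $(-\infty,0)$, so it sweeps $\R$ exactly once, with $\dd s/\dd t = \sec^2(Bt) = 1+B^2s^2$, that is $\dd t = \cos^2(Bt)\,\dd s$ and $\cos^2(Bt) = (1+B^2s^2)^{-1}$. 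Hence
\begin{align*}
\|M(t)\psi_0\|_{L^q_tL^r_x((0,\pi/B)\times\R^2)}^q = K^q\int_{\R}(1+B^2s^2)^{-\frac{q}{2}\left(\frac{2}{r}-1\right)-1}\,\|e^{is\Delta}\psi_0\|_{L^r_x(\R^2)}^q\,\dd s ,
\end{align*}
and the exponent of $(1+B^2s^2)$ vanishes precisely when $\frac{q}{2}\left(1-\frac{2}{r}\right)=1$, i.e. $\frac{2}{q} = 1 - \frac{2}{r}$, which is Schr\"odinger admissibility in dimension $2$. The weight then disappears, the integral collapses to $\|e^{it\Delta}\psi_0\|_{L^q_tL^r_x(\R\times\R^2)}^q$ (finite by the free Strichartz estimate (\ref{eq:strichartz_estimates})), and the multiplicative constant is recovered by keeping track of $K$, i.e. of the prefactor $B/(4\pi\sin(Bt))$ against the free-propagator prefactor $1/(4\pi i t)$.

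The only real obstacle I anticipate is Step 1: arranging the Gaussian-and-Fourier bookkeeping so that the rotation $R_{Bt}$, the dilation $\cos(Bt)^{-1}$, the time change $s = \tan(Bt)/B$, and the quadratic phase $\phi(t)$ all come out consistently, and then checking that nothing pathological happens at $t = \pi/2B$: there $\cos(Bt)=0$ and $s\to\pm\infty$, the Mehler kernel degenerating into a rescaled Fourier transform that matches the $|s|\to\infty$ asymptotics of $e^{is\Delta}$; since $\{\pi/2B\}$ has measure zero and maps to the tails of the $s$-integral, it contributes nothing to the $L^q_t$-norm and needs no separate treatment. Once the pointwise identity is in hand, Steps 2 and 3 are only the two changes of variables, and the admissibility relation does the rest.
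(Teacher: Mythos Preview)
Your proposal is correct and follows essentially the same route as the paper's proof: both expand the Mehler kernel into a quadratic phase times a Fourier transform of a Gaussian-modulated datum (using the identity $\cot(Bt)\,\v{x}\cdot\v{y}+\v{x}\wedge\v{y}=\sin(Bt)^{-1}(R_{Bt}\v{x})\cdot\v{y}$), then change variables first in space and then in time, the Schr\"odinger admissibility relation being exactly what kills the residual weight.

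The one genuine difference is the choice of time substitution. The paper sets $s=B\cot(Bt)$, which matches the coefficient in the Gaussian modulation $e^{-is|\v{y}|^2/4}$ of the datum, and leaves the final identification with $\|e^{it\Delta}\psi_0\|_{L^q_tL^r_x}$ implicit (it requires one more recognition that the $L^r$-norm of $\mathcal{F}(e^{-is|\cdot|^2/4}\psi_0)$ is, after a further inversion $\tau=-1/s$, the $L^r$-norm of $e^{i\tau\Delta}\psi_0$ up to a power of $|\tau|$). You instead set $s=\tan(Bt)/B$, which is the reciprocal of the paper's variable, and thereby obtain a clean pointwise lens-transform identity $M(t)\psi_0(\v{x})=c(t)\,e^{i\phi(t)|\v{x}|^2}(e^{is\Delta}\psi_0)(R_{Bt}\v{x}/\cos(Bt))$ before taking any norms. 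Your version is a bit more transparent conceptually (it is the classical Niederer/pseudoconformal map, with $|c(t)|=|\cos(Bt)|^{-1}$), while the paper's version avoids writing the pointwise identity and works directly at the level of norms; the two computations are equivalent and of the same length.
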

\begin{proof}
Observe that we may write the Mehler kernel (\ref{eq:mehler_kernel}) as
\begin{align*}
M (\v{x} , \v{y}) = \frac{B}{4\pi \sin{(Bt)}} \exp{ \left\lbrace \frac{B}{4i} \cot{(Bt)} \left( |\v{x}|^2 + |\v{y}|^2 \right) \right\rbrace } \exp{ \left\lbrace i \frac{B \v{y} \cdot R(Bt) \v{x}}{2 \sin{(Bt)}} \right\rbrace } ,
\end{align*}
where $R(\theta)$ is the usual $2 \times 2$ rotation matrix given by
\begin{align*}
R (\theta) = \left( \begin{array}{cc}
\cos{(\theta)} & - \sin{(\theta)} \\
\sin{(\theta)} & \cos{(\theta)} 
\end{array} \right) .
\end{align*}  The unitary propagator $U_{\mr{S}} (t) \equiv M(t)$ acting on $\psi_0 : \R^2 \rightarrow \C$ may be written
\begin{align*}
( M (t) \psi_0 ) (\v{x}) = \frac{B \exp{ \left\lbrace \frac{B}{4i} \cot{(Bt)} |\v{x}|^2 \right\rbrace } }{4\pi \sin{(Bt)}} \int_{\R^2} \exp{ \left\lbrace  i \frac{B \v{y} \cdot R(Bt) \v{x}}{2 \sin{(Bt)}} \right\rbrace } g(\v{y},t) \dd \v{y} ,
\end{align*}
where
\begin{align*}
g(\v{y},t) = \exp{ \left\lbrace \frac{B}{4i} \cot{(Bt)} |\v{y}|^2 \right\rbrace } \psi_0 (\v{y}) .
\end{align*}
Therefore,
\begin{align*}
( M (t) \psi_0 ) (\v{x}) = \frac{B\exp{ \left\lbrace \frac{B}{4i} \cot{(Bt)} |\v{x}|^2 \right\rbrace }}{4\pi \sin(Bt)}  \cF g \left( - \frac{B R(Bt) \v{x}}{4\pi \sin(Bt)} , t \right) ,
\end{align*}
where $\cF$ is the Fourier transform defined by
\begin{align*}
( \cF f )(\v{k}) = \int_{\R^d} e^{- 2 \pi i \v{k} \cdot \v{x}} f(\v{x}) \dd \v{x} . 
\end{align*}
Letting $r \geq 2$, we may now compute
\begin{align*}
\int_{\R^2} |  ( M (t) \psi_0 ) (\v{x}) |^r \dd \v{x} & = \left| \frac{B}{4\pi \sin(Bt)} \right|^r \int_{\R^2} \left| \cF g \left( \frac{B}{4\pi \sin(Bt)} \v{x} , t \right) \right|^r \dd \v{x} \\
& = \left| \frac{B}{4\pi \sin(Bt)}\right|^{r-2} \int_{\R^2} |\cF g (\v{x}, t)|^r d \v{x} .
\end{align*}
If we raise this to the $q/r$-power, integrate over $t$ from $0$ to $\pi / B$, and use the substitution $s = B \cot{(Bt)}$, we find
\begin{align*}
& \int_0^{\pi/B} \left( \int_{\R^2} | ( M(t) \psi_0 ) (\v{x}) |^r \dd \v{x} \right)^{\frac{q}{r}} \dd t \\
& = \frac{1}{(4 \pi)^2} \int_{-\infty}^{\infty} \left| \frac{B}{4\pi \sin(Bt)}\right|^{\frac{q}{r} (r - 2) - 2} \left( \int_{\R^2} \left| \cF \left( e^{- i s | \cdot |^2 / 4} \psi_0 \right) (\v{x}) \right|^r \dd \v{x} \right)^{\frac{q}{r}} \dd s .
\end{align*}
If $(q,r)$ are Schr\"{o}dinger admissible, we have
\begin{align*}
\frac{q}{r} (r - 2) - 2 = 0 .
\end{align*}
This last observation together with the previous calculation yields (\ref{eq:unitary_propagator_identity}).
\end{proof}
Theorem \ref{thm:strichartz_identity} states in essence that the Strichartz estimates for the Mehler kernel (\ref{eq:mehler_kernel}) are the same as those for the free Schr\"{o}dinger evolution. In particular, we bring attention to an interesting corollary to Theorem \ref{thm:strichartz_identity}.
\begin{cor}
For $(q,r) = (4,4)$, the identity (\ref{eq:unitary_propagator_identity}) implies the sharp constant for the Strichartz estimate for the Mehler kernel (\ref{eq:mehler_kernel}) is the same as for the free Schr\"{o}dinger evolution, namely $1 / \sqrt{2}$ \cite{HD06}.
\end{cor}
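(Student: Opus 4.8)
\emph{Proof proposal.} The plan is to specialize the identity (\ref{eq:unitary_propagator_identity}) of Theorem~\ref{thm:strichartz_identity} to the diagonal exponent $(q,r)=(4,4)$, which is Schr\"odinger admissible in $d=2$ since $\tfrac{2}{4}=2\bigl(\tfrac12-\tfrac14\bigr)$ and $(4,4,2)\neq(2,\infty,2)$. For this choice the numerical prefactor degenerates, $(4\pi)^{1-4/q}=(4\pi)^{1-1}=1$, so the identity collapses to the clean equality
\begin{align*}
\| M(t)\psi_0 \|_{L^4_t L^4_x((0,\pi/B)\times\R^2)} = \| e^{it\Delta}\psi_0 \|_{L^4_t L^4_x(\R\times\R^2)}
\end{align*}
valid for every $\psi_0\in L^2(\R^2)$.

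From here the conclusion is immediate. Define the sharp constants as the operator norms of the two space-time maps, i.e. $C_{\mathrm{Mehler}}=\sup_{\psi_0\neq 0}\|M(\cdot)\psi_0\|_{L^4_{t,x}((0,\pi/B)\times\R^2)}/\|\psi_0\|_{L^2(\R^2)}$ and, analogously, $C_{\mathrm{free}}$ for $e^{it\Delta}$ over $\R\times\R^2$. Since the numerators agree for each fixed $\psi_0$, the two ratios coincide and hence so do their suprema: $C_{\mathrm{Mehler}}=C_{\mathrm{free}}$. By the sharp two-dimensional $L^4_{t,x}$ Strichartz bound of \cite{HD06} one has $C_{\mathrm{free}}=1/\sqrt{2}$, which gives the claim. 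Note that this argument uses only the exact identity, and in particular is insensitive to whether an extremizer exists for either problem.

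The only points requiring care are bookkeeping ones rather than genuine obstacles: one must ensure that the normalization of the Fourier transform (and of the Laplacian / time variable) used here matches that of \cite{HD06}, since although the equality $C_{\mathrm{Mehler}}=C_{\mathrm{free}}$ is convention-independent — any rescaling of $\widehat f$ or of $t$ scales both sides identically — the specific value $1/\sqrt2$ is not. One should also record that the left-hand norm is taken over a single Larmor period $(0,\pi/B)$, which, because $M(t)$ is periodic up to a phase with period $\pi/B$, is the natural and maximal interval on which a non-trivial uniform bound can hold. All of the substantive work is already contained in Theorem~\ref{thm:strichartz_identity}.
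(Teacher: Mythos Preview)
Your proposal is correct and follows exactly the approach the paper indicates: the paper simply states that ``the proof of the previous corollary follows directly from the formula (\ref{eq:unitary_propagator_identity}),'' and you have spelled this out by observing that for $(q,r)=(4,4)$ the prefactor $(4\pi)^{1-4/q}$ equals $1$, whence the two $L^4_{t,x}$ norms coincide and so do their sharp constants. Your additional remarks on conventions and the Larmor period are sensible glosses but not required for the argument.
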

We refer the reader to \cite{HD06} for a derivation of the sharp constant for the free Schr\"odinger Strichartz estimate in the $(q,r,d) = (4,4,2)$ case. The proof of the previous corollary follows directly from the formula (\ref{eq:unitary_propagator_identity}). Lastly, we mention that a simple formula like (\ref{eq:unitary_propagator_identity}) doesn't seem possible in $3$D.

\subsection{Finite Time Blow-up for Uniform Magnetic Fields}\label{sub:blowup}

We introduce the function space
\begin{align*}
\Sigma_{\mr{S}} = \{ f \in H^1_{\!\v{A}} (\R^d ; \C) ~ : ~ \int_{\R^d} |\v{x}|^2 |f (\v{x})|^2 \dd \v{x} < \infty \} ,
\end{align*}
and define $F_{\mr{S}} : \Sigma_{\mr{S}} \rightarrow \R$ by 
\begin{align}\label{def:blowup_functional_schr}
F_{\mr{S}} [\psi , \v{A}] = E_{\mr{S}} [\psi , \v{A}] - B \re{ \langle \v{x}_{\perp} \psi , (\v{p} + \v{A}) \psi \rangle_{L^2(\R^d)} } + \frac{B^2}{2} \| \rho \psi \|_{L^2 (\R^d)}^2 ,
\end{align}
where $\rho = \sqrt{x_1^2 + x_2^2}$.\footnote{We will frequently switch between the polar/cylindrical coordinates $(\rho , \vartheta)$ and $(\rho , \vartheta , x_3)$ and the Cartesian coordinates $(x_1 , x_2)$ and $(x_1 , x_2 , x_3)$ for $\R^2$ and $\R^3$, respectively. Here $\rho = \sqrt{ x_1^2 + x_2^2 }$ and $\tan{\vartheta} = x_2 / x_1$.} The functional $F_{\mr{S}} [\psi , \v{A}]$ defined on $\Sigma_{\mr{S}}$ will play a key role in providing a sufficient condition for blow-up of solutions to (\ref{eq:NLMS}) when $p \geq 1 + 4/d$ and $\mu < 0$. Since the vector potential $\v{A}$ will be fixed we will usually suppress the $\v{A}$-dependence of $F_{\mr{S}}$ and simply write $F_{\mr{S}} [\psi]$. 

Observe that the functional $F_{\mr{S}}$ is gauge invariant, which is the main reason why we didn't choose to fix any particular gauge from the start. Indeed, if we select the symmetric gauge $\v{A} = B \v{x}_{\perp} / 2$, then the term
\begin{align*}
- B \re{ \langle \v{x}_{\perp} \psi , (\v{p} + \v{A}) \psi \rangle_{L^2 (\R^d)} } + \frac{B^2}{2} \| \rho \psi \|_{L^2 (\R^d)}^2
\end{align*} 
in (\ref{def:blowup_functional_schr}) simply equals $- B \langle \psi , L_3 \psi \rangle_{L^2 (\R^d)}$, where $L_3 \equiv \v{x}_{\perp} \cdot \v{p} = - i \partial_{\vartheta} = - i ( - x_2 \partial_1 + x_1 \partial_2 )$ is the $x_3$-component of the angular momentum. Since we are considering a uniform magnetic field along the $x_3$-axis, it is reasonable to assert the $x_3$-component of the angular momentum is preserved. Indeed, at least formally, 
\begin{align*}
\frac{\dd}{\dd t} \langle \psi , L_3 \psi \rangle_{L^2 (\R^d)} = \int_{\R^d} |\psi|^2 \partial_{\vartheta} |\psi|^{p-1} = \frac{p-1}{p+1} \int_{\R^d} \partial_{\vartheta} |\psi|^{p+1} = 0 . 
\end{align*}
The next Lemma makes this precise, showing that, in any gauge, $F_{\mr{S}}$ is conserved under the time evolution of (\ref{eq:NLMS}).
\begin{lem}\label{lem:conservation_of_Fs}
Let $d \in \{2,3\}$ and $\v{A} \in L_{\mr{loc}}^2 (\R^d ; \R^d)$ generate a uniform magnetic field $\v{B} = (0,0,B)$. Let $\psi_0 \in H_{\!\v{A}}^1 (\R^d)$ and $\psi \in C([0,T_*) , H_{\!\v{A}}^1) \cap C^1 ([0,T_*) , H_{\!\v{A}}^{-1})$ denote the corresponding solution to (\ref{eq:NLMS}). Then, $F_{\mr{S}} [\psi (t)] = F_{\mr{S}} [\psi_0]$. 
\end{lem}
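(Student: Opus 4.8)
The plan is to use the gauge invariance of $F_{\mr{S}}$ to reduce the claim to the conservation of the $x_3$-angular momentum, carry out the formal computation sketched just above the Lemma, and then justify it at the stated low regularity by truncation and approximation.

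First I would reduce to the symmetric gauge. Since $\v{A}$ and $B\v{x}_\perp/2$ generate the same field, their difference is curl-free and $L^2_{\mr{loc}}$, hence equal to $\nabla\phi$ for some $\phi\in H^1_{\mr{loc}}(\R^d)$; the gauge transformation $\psi\mapsto e^{i\phi}\psi$ sends solutions of (\ref{eq:NLMS}) for $\v{A}$ to solutions for $B\v{x}_\perp/2$, preserves $\Sigma_{\mr{S}}$ (it does not change $|\psi|$), and leaves $F_{\mr{S}}$ invariant. So it is enough to prove the Lemma in the symmetric gauge, where, as already observed,
\[
F_{\mr{S}}[\psi]=E_{\mr{S}}[\psi]-B\,\langle\psi,L_3\psi\rangle_{L^2(\R^d)},\qquad L_3=-i\partial_\vartheta=\v{x}_\perp\cdot\v{p}.
\]
The energy $E_{\mr{S}}[\psi(t)]$ is conserved by Theorem \ref{thm:CE88}(4) (and its $d=2$ analogue obtained from Theorem \ref{thm:strichartz_identity}), so the whole statement reduces to showing that $I[\psi(t)]:=\langle\psi(t),L_3\psi(t)\rangle$ is constant in $t$. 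Note $I$ is well defined on $\Sigma_{\mr{S}}$: $I[\psi]=\re\langle\v{x}_\perp\psi,\v{p}\psi\rangle$ with $\v{x}_\perp\psi\in L^2$ by definition of $\Sigma_{\mr{S}}$ and $\v{p}\psi=(\v{p}+\v{A})\psi-\tfrac{B}{2}\v{x}_\perp\psi\in L^2$.

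The formal identity I would establish is the following. Using self-adjointness of $L_3$ and $i\partial_t\psi=H\psi+\mu|\psi|^{p-1}\psi$ with $H=(\v{p}+\v{A})^2$,
\[
\frac{\dd}{\dd t}I[\psi]=2\re\langle L_3\psi,\partial_t\psi\rangle=2\im\langle L_3\psi,H\psi\rangle+2\mu\,\im\langle L_3\psi,|\psi|^{p-1}\psi\rangle.
\]
In the symmetric gauge $H$ is invariant under rotations about the $x_3$-axis — in $d=3$ one also uses $[L_3,\partial_3^2]=0$ — so $[L_3,H]=0$ and $\im\langle L_3\psi,H\psi\rangle=\tfrac{1}{2i}\langle\psi,[L_3,H]\psi\rangle=0$. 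For the nonlinear term,
\[
\im\langle L_3\psi,|\psi|^{p-1}\psi\rangle=\re\!\int_{\R^d}\overline{\partial_\vartheta\psi}\,|\psi|^{p-1}\psi\,\dd\v{x}=\frac{1}{p+1}\int_{\R^d}\partial_\vartheta\!\left(|\psi|^{p+1}\right)\dd\v{x}=0,
\]
the last equality by $2\pi$-periodicity of the integrand in $\vartheta$ (and $H^1_{\!\v{A}}\hookrightarrow L^{p+1}$ over the relevant range of $p$ guarantees integrability). Hence $\frac{\dd}{\dd t}I[\psi]=0$.

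The remaining work is to make this rigorous at the stated regularity, where $I$ involves the unbounded weight $\v{x}_\perp$ and $\partial_t\psi$ lies a priori only in $H^{-1}_{\!\v{A}}$, so the differentiation above is not literally legal. I would first prove the identity for $\psi_0\in H^2_{\!\v{A}}\cap\Sigma_{\mr{S}}$ with extra decay (e.g. Schwartz data), where $\partial_t\psi\in C([0,T_*),L^2)$ by Theorem \ref{thm:CE88}(3) and the integrations by parts in $\vartheta$ and the commutator manipulations are legitimate; to control the weight I would insert a smooth radial cutoff $\chi_R$, compute $\tfrac{\dd}{\dd t}\langle\psi,\chi_R L_3\psi\rangle$, and let $R\to\infty$ using a bound on $\|\rho\psi(t)\|_{L^2}$ that is locally uniform in $t$. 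For general $\psi_0\in H^1_{\!\v{A}}\cap\Sigma_{\mr{S}}$ I would approximate by $\phi_n\to\psi_0$ with also $\rho\phi_n\to\rho\psi_0$ in $L^2$, apply the continuous dependence of Theorem \ref{thm:CE88}(2) together with a differential inequality making $t\mapsto\|\rho\psi_n(t)\|_{L^2}$ equicontinuous, and pass to the limit in each term of $F_{\mr{S}}$. The main obstacle is exactly this control of the weighted norm $\|\rho\psi(t)\|_{L^2}$ — i.e. showing that the flow preserves $\Sigma_{\mr{S}}$ with bounds uniform along the approximating sequence — which is a small virial estimate in the spirit of the identity (\ref{eq:constantfield_xsquare_gauge_independent}) derived later in the paper; granting it, the limiting argument is routine.
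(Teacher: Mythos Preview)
Your argument is correct, but it follows a different route than the paper's own proof. You fix the symmetric gauge at the outset, reduce $F_{\mr{S}}$ to $E_{\mr{S}}-B\langle L_3\rangle$, and then prove conservation of $\langle L_3\rangle$ by the symmetry observation $[L_3,(\v{p}+\v{A})^2]=0$. The paper instead stays in an arbitrary gauge and computes $\frac{\dd}{\dd t}\re\langle\v{x}_\perp\psi,(\v{p}+\v{A})\psi\rangle$ directly by expanding the commutator $[\gvpi^2,\v{x}_\perp\cdot\gvpi]$ componentwise using $[\pi_1,\pi_2]=-iB$, arriving at the identity
\[
\frac{\dd}{\dd t}\re\langle\v{x}_\perp\psi,\gvpi\psi\rangle_{L^2}=\frac{B}{2}\frac{\dd}{\dd t}\|\rho\psi\|_{L^2}^2,
\]
which immediately gives the conservation of the combination appearing in $F_{\mr{S}}$. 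Your route is shorter and more conceptual, making the angular-momentum origin of the conservation law transparent; the paper's route avoids the gauge-fixing step and yields the explicit differential identity above, which is then reused verbatim in the proof of Theorem~\ref{thm:blowup_NLMS} and carries over with minimal change to the Pauli case in \S\ref{sec:NLP}. Both approaches handle the nonlinear term identically and rely on the same $H^2_{\!\v{A}}$-density and continuous-dependence argument for the regularity justification.
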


The proof of Lemma \ref{lem:conservation_of_Fs} is given at the end of \S\ref{sub:virial}. Using that $F_{\mr{S}} [\psi]$ is conserved we have the following Theorem concerning the second time derivative of the expectation value of $|\v{x}|^2$.

\begin{thm}\label{thm:blowup_NLMS}
Let $d \in \{ 2, 3 \}$, $1 < p < 1 + 4/(d-2)$, $\v{A} \in L_{\mr{loc}}^2 (\R^d ; \R^d)$ generate a uniform magnetic field $\v{B} = (0,0,B)$, and $\psi_0 \in \Sigma_{\mr{S}}$. Let $\psi \in C ( [0,T_*) ; \Sigma_{\mr{S}} ) \cap C^1 ( [0 , T_*) ; H_{\!\v{A}}^{-1} )$ be the corresponding maximal solution to the initial value problem (\ref{eq:NLMS}). Then, the function $g (t) = \frac{1}{4} \| \v{x} \psi (t) \|_{L^2 (\R^d)}^2$ satisfies the virial identity:
\begin{align}\label{eq:constantfield_xsquare_gauge_independent}
\ddot{g} (t) = 2 F_{\mr{S}} [\psi_0] + \mu d \frac{p - (1 + 4 / d)}{p+1} \| \psi (t) \|_{L^{p+1} (\R^d)}^{p+1} - B^2 \| \rho \psi (t) \|_{L^2 (\R^d)}^2 .
\end{align}
In particular, if $d=2$ and $p = 3$, then (\ref{eq:constantfield_xsquare_gauge_independent}) becomes a second-order equation for $g$ that can be solved exactly:
\begin{align}\label{eq:constantfield_explicitsolution}
g(t) = \frac{F_{\mr{S}} [\psi_0]}{2 B^2} + \left( g_0 - \frac{F_{\mr{S}} [\psi_0]}{2 B^2} \right) \cos{(2Bt)} + \frac{\dot{g}_0}{2 B} \sin{(2Bt)}
\end{align}
where $g_0 = \frac{1}{4} \| \rho \psi_0 \|_{L^2 (\R^2)}^2$ and $\dot{g}_0 = \re{ \langle \v{x} \psi_0 , (\v{p} + \v{A}) \psi_0 \rangle_{L^2 (\R^2)}}$.
\end{thm}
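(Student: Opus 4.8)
The plan is to differentiate $g(t)=\tfrac14\langle\psi(t),|\v{x}|^2\psi(t)\rangle_{L^2}$ twice along the flow (\ref{eq:NLMS}) and then eliminate the kinetic terms by means of the conserved quantity $F_{\mr{S}}$ from Lemma \ref{lem:conservation_of_Fs}. Since $g$, $T_{\mr{S}}$, $\|\psi\|_{L^{p+1}}$, $\|\rho\psi\|_{L^2}$ and $F_{\mr{S}}$ are all gauge invariant, I would first reduce to the symmetric gauge $\v{A}=B\v{x}_\perp/2$, where $\v{A}$ is smooth and the operator manipulations below cause no trouble; since every term of (\ref{eq:constantfield_xsquare_gauge_independent}) is gauge invariant, the identity then holds in any gauge. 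Write $\gvpi=\v{p}+\v{A}$, with components $\pi_j=p_j+A_j$. Using $i\partial_t\psi=\gvpi^2\psi+\mu|\psi|^{p-1}\psi$, the first derivative is $\dot g=\tfrac12\im\langle\psi,|\v{x}|^2(\gvpi^2\psi+\mu|\psi|^{p-1}\psi)\rangle$; the nonlinear piece drops out at this order because $|\v{x}|^2|\psi|^{p-1}$ is a real multiplication operator, and the commutator identity $[|\v{x}|^2,\gvpi^2]=2i(\v{x}\cdot\gvpi+\gvpi\cdot\v{x})$ (from $[\pi_j,x_k]=-i\delta_{jk}$) gives $\dot g=\tfrac12\langle\psi,(\v{x}\cdot\gvpi+\gvpi\cdot\v{x})\psi\rangle=\re\langle\v{x}\psi,\gvpi\psi\rangle$. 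This already identifies $\dot g_0$ as stated. It is convenient to abbreviate $D=\tfrac12(\v{x}\cdot\gvpi+\gvpi\cdot\v{x})$, so $\dot g=\langle\psi,D\psi\rangle$ with $D$ (formally) self-adjoint.

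Differentiating again and using the equation once more, $\ddot g$ splits as
\begin{equation*}
\ddot g(t)=\langle\psi,i[\gvpi^2,D]\psi\rangle+i\mu\big(\langle|\psi|^{p-1}\psi,D\psi\rangle-\langle\psi,D(|\psi|^{p-1}\psi)\rangle\big),
\end{equation*}
the first term being the ordinary Heisenberg commutator (legitimate since $\gvpi^2$ is linear and self-adjoint) and the second a nonlinear correction arising from $[D,|\psi|^{p-1}]$. For the first term I would use $[\gvpi^2,x_j]=-2i\pi_j$ together with the fact that, the field being uniform, each $[\pi_j,\pi_k]$ is a \emph{constant} (equal to $\mp iB$ for $(j,k)=(1,2),(2,1)$ and $0$ otherwise); a short computation then yields $i[\gvpi^2,D]=2\gvpi^2-2B\,\v{x}_\perp\cdot\gvpi$, so this contribution is $2T_{\mr{S}}[\psi]-2B\re\langle\v{x}_\perp\psi,\gvpi\psi\rangle$. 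For the second term, since $D$ differs from a piece commuting with multiplication operators only through $\v{x}\cdot\v{p}$, one gets $[D,|\psi|^{p-1}]=-i\,\v{x}\cdot\nabla|\psi|^{p-1}$, whence the correction equals $-\mu\int(\v{x}\cdot\nabla|\psi|^{p-1})|\psi|^2$; writing $(\v{x}\cdot\nabla|\psi|^{p-1})|\psi|^2=\tfrac{p-1}{p+1}\,\v{x}\cdot\nabla|\psi|^{p+1}$ and integrating by parts gives $\tfrac{\mu d(p-1)}{p+1}\|\psi\|_{L^{p+1}}^{p+1}$, exactly the field-free Glassey term in (\ref{eq:glassey_virial}). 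Hence
\begin{equation*}
\ddot g(t)=2T_{\mr{S}}[\psi(t)]-2B\re\langle\v{x}_\perp\psi(t),\gvpi\psi(t)\rangle+\frac{\mu d(p-1)}{p+1}\|\psi(t)\|_{L^{p+1}}^{p+1}.
\end{equation*}

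Now I invoke Lemma \ref{lem:conservation_of_Fs}: $F_{\mr{S}}[\psi(t)]=F_{\mr{S}}[\psi_0]$. Expanding (\ref{def:blowup_functional_schr}) together with $E_{\mr{S}}=T_{\mr{S}}+\tfrac{2\mu}{p+1}\|\psi\|_{L^{p+1}}^{p+1}$ gives the identity $2T_{\mr{S}}[\psi(t)]-2B\re\langle\v{x}_\perp\psi(t),\gvpi\psi(t)\rangle=2F_{\mr{S}}[\psi_0]-\tfrac{4\mu}{p+1}\|\psi(t)\|_{L^{p+1}}^{p+1}-B^2\|\rho\psi(t)\|_{L^2}^2$; substituting this into the displayed expression for $\ddot g$ and combining the nonlinear terms via $\tfrac{d(p-1)-4}{p+1}=d\,\tfrac{p-(1+4/d)}{p+1}$ yields (\ref{eq:constantfield_xsquare_gauge_independent}). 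For the last assertion: when $d=2$ and $p=3$ the coefficient $p-(1+4/d)$ vanishes, killing the nonlinear term, and in $\R^2$ one has $\rho=|\v{x}|$, so $\|\rho\psi(t)\|_{L^2}^2=\|\v{x}\psi(t)\|_{L^2}^2=4g(t)$. Thus (\ref{eq:constantfield_xsquare_gauge_independent}) collapses to the constant-coefficient linear ODE $\ddot g+4B^2 g=2F_{\mr{S}}[\psi_0]$, whose unique solution with $g(0)=g_0$ and $\dot g(0)=\dot g_0$ is (\ref{eq:constantfield_explicitsolution}).

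The main obstacle is rigor rather than algebra: the manipulations above presuppose enough regularity and spatial decay (roughly $\psi\in H_{\!\v{A}}^2\cap\Sigma_{\mr{S}}$) to differentiate under the integral sign, move the unbounded multipliers $|\v{x}|^2$ and $x_j$ past $\gvpi$, and integrate by parts. For data merely in $\Sigma_{\mr{S}}$, with $\psi\in C([0,T_*);\Sigma_{\mr{S}})\cap C^1([0,T_*);H_{\!\v{A}}^{-1})$, I would run the computation for a regularized problem—truncating $|\v{x}|^2$ by a smooth cutoff $\chi(|\v{x}|/R)$ and approximating $\psi_0$ in $\Sigma_{\mr{S}}$ by functions in $C^\infty_c$—and then remove the regularization, passing to the limit with the help of the continuous dependence in Theorem \ref{thm:CE88} and the a priori finiteness of $\|\v{x}\psi(t)\|_{L^2}$ on $[0,T_*)$. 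Everything else is routine commutator bookkeeping.
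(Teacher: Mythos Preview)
Your proof is correct and follows essentially the same route as the paper: derive the intermediate virial identity $\ddot g = 2T_{\mr{S}}[\psi] - 2B\,\re\langle \v{x}_\perp\psi,\gvpi\psi\rangle + \tfrac{\mu d(p-1)}{p+1}\|\psi\|_{L^{p+1}}^{p+1}$, then invoke the conservation of $F_{\mr{S}}$ (Lemma~\ref{lem:conservation_of_Fs}) to rewrite it as (\ref{eq:constantfield_xsquare_gauge_independent}), and handle the regularity issue by density from $H^2_{\!\v{A}}\cap\Sigma_{\mr{S}}$ via Theorem~\ref{thm:CE88}. The only cosmetic differences are that the paper computes the commutator with the \emph{free} dilation generator $D=\tfrac12(\v{x}\cdot\v{p}+\v{p}\cdot\v{x})$ via conjugation by $e^{i\theta D}$ and then treats $[\gvpi^2,\v{x}\cdot\v{A}]$ separately (their Lemma~\ref{lem:NLMS_Virial}), whereas you work directly with the magnetic version $D=\tfrac12(\v{x}\cdot\gvpi+\gvpi\cdot\v{x})$ and exploit that $[\pi_j,\pi_k]$ is constant; and the paper proves Lemma~\ref{lem:conservation_of_Fs} and Theorem~\ref{thm:blowup_NLMS} in one stroke rather than citing the former.
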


\begin{rem}
It is not obvious that, for $\psi_0 \in \Sigma_{\mr{S}}$, the corresponding solution is in $\psi \in C ( [0,T_*) ; \Sigma_{\mr{S}} ) \cap C^1 ( [0 , T_*) ; H_{\!\v{A}}^{-1} )$. This fact is shown in \cite{ribeiro91} in the course of proving of Theorem 1.2 there. 
\end{rem}

The proof of Theorem \ref{thm:blowup_NLMS} is reserved for \S\ref{sub:virial}. Following a similar reasoning as discussed in \S\ref{sec:intro} for (\ref{eq:NLS}), observe that if $\mu < 0$ and $p \geq 1 + 4/d$, then $\ddot{g}(t) \leq 2 F_{\mr{S}} [\psi_0]$. So if $F_{\mr{S}} [\psi_0] < 0$, then $\ddot{g} (t) < 0$ for all times $t \in [0 , T_*)$. If $T_* = + \infty$, then $g$ would necessarily hit $0$ at some $t_* > 0$, implying (by the uncertainty principle) that $\| (\v{p} + \v{A}) \psi (t_*) \|_{L^2 (\R^d)} = + \infty$. This contradicts the blow-up alternative of Theorem \ref{thm:CE88}.
\begin{cor}\label{cor:blow_up_schr}
Suppose $\mu < 0$, $1 + 4/d \leq p < 1 + 4/(d-2)$, $\psi_0 \in \Sigma_{\mr{S}}$, and $F_{\mr{S}}[\psi_0] < 0$. Then the corresponding solution to (\ref{eq:NLMS}) blows up in finite time. If $F_{\mr{S}} [\psi_0] = 0$, then blow-up occurs when $\dot{g} (0) = \re{ \langle \v{x} \psi_0 , (\v{p} + \v{A}) \psi_0 \rangle_{L^2 (\R^d)}} < 0$. 
\end{cor}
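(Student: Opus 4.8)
The plan is to derive the corollary as a soft consequence of the virial identity (\ref{eq:constantfield_xsquare_gauge_independent}) from Theorem \ref{thm:blowup_NLMS}, the conservation of charge from Theorem \ref{thm:CE88}, and the blow-up alternative, with a quantitative uncertainty principle as the only extra ingredient. First I would record the sign bookkeeping: under the standing hypotheses $\mu < 0$ and $p \geq 1 + 4/d$, both correction terms on the right-hand side of (\ref{eq:constantfield_xsquare_gauge_independent}) are non-positive. Indeed, the prefactor $\mu d (p - (1+4/d))/(p+1)$ multiplying the non-negative quantity $\|\psi(t)\|_{L^{p+1}(\R^d)}^{p+1}$ has the sign of $\mu$ (the number $p-(1+4/d)$ being $\geq 0$), and $-B^2\|\rho\psi(t)\|_{L^2(\R^d)}^2 \leq 0$. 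Hence $\ddot g(t) \leq 2 F_{\mr{S}}[\psi_0]$ on $[0,T_*)$, and integrating twice from $0$ yields the pointwise bound
\begin{align*}
0 \leq g(t) \leq g(0) + \dot g(0)\, t + F_{\mr{S}}[\psi_0]\, t^2, \qquad t \in [0,T_*),
\end{align*}
where the left inequality is immediate from $g(t) = \tfrac14\|\v{x}\psi(t)\|_{L^2(\R^d)}^2$. Note that $g$ is finite throughout $[0,T_*)$ because $\psi \in C([0,T_*);\Sigma_{\mr{S}})$, which holds for $\psi_0 \in \Sigma_{\mr{S}}$ by the Remark following Theorem \ref{thm:blowup_NLMS}.

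Next I would argue by contradiction that $T_* < \infty$. If $F_{\mr{S}}[\psi_0] < 0$, the upper bound above is a downward parabola and tends to $-\infty$ as $t \to \infty$. If instead $F_{\mr{S}}[\psi_0] = 0$ but $\dot g(0) < 0$, then $\ddot g \leq 0$ forces $\dot g$ to be non-increasing, so $\dot g(t) \leq \dot g(0) < 0$ for all $t$, and therefore $g(t) \leq g(0) + \dot g(0)\,t \to -\infty$. In either case, $T_* = +\infty$ would contradict $g \geq 0$; hence $T_* < \infty$.

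Finally, to upgrade ``$T_* < \infty$'' to genuine blow-up of the $H^1_{\!\v{A}}$-norm, I would invoke part (1) of Theorem \ref{thm:CE88}: since $T_* < \infty$, necessarily $\|(\v{p}+\v{A})\psi(t)\|_{L^2(\R^d)} \to \infty$ as $t \uparrow T_*$. Alternatively, and more in the spirit of the paragraph preceding the corollary, one combines the charge conservation $\|\psi(t)\|_{L^2(\R^d)} = \|\psi_0\|_{L^2(\R^d)}$ with the diamagnetic inequality (\ref{eq:diamagnetic}) and the uncertainty principle $\|f\|_{L^2(\R^d)}^2 \leq \tfrac{2}{d}\|\v{x}f\|_{L^2(\R^d)}\|\nabla f\|_{L^2(\R^d)}$ to get $\|(\v{p}+\v{A})\psi(t)\|_{L^2(\R^d)} \geq d\|\psi_0\|_{L^2(\R^d)}^2 / (4\sqrt{g(t)})$; since the parabola bound forces $g$ to reach $0$ at a finite time $t_*$, this both gives the blow-up and shows $g$ cannot vanish at an interior point of the maximal existence interval. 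I do not expect a real obstacle here: the analytic content is entirely contained in Theorem \ref{thm:blowup_NLMS} and Lemma \ref{lem:conservation_of_Fs}, and the only points needing care are the sign accounting in the first step, the use of the $\psi\in C([0,T_*);\Sigma_{\mr{S}})$ regularity so that $g$ is a well-defined $C^2$ function for which (\ref{eq:constantfield_xsquare_gauge_independent}) holds, and observing that the borderline case $F_{\mr{S}}[\psi_0]=0$ with $\dot g(0)=0$ is (correctly) not asserted.
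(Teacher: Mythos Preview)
Your proposal is correct and follows essentially the same route as the paper: deduce $\ddot g(t)\le 2F_{\mr S}[\psi_0]$ from the virial identity (\ref{eq:constantfield_xsquare_gauge_independent}) via the sign accounting, use concavity/integration to force $g$ to hit zero in finite time, and conclude via the uncertainty principle and the blow-up alternative of Theorem~\ref{thm:CE88}. Your write-up is in fact more explicit than the paper's paragraph preceding the corollary, in particular you spell out the $F_{\mr S}[\psi_0]=0$, $\dot g(0)<0$ case separately, which the paper only records in the statement.
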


Note that (\ref{eq:constantfield_explicitsolution}) implies the time for blow-up to occur decreases as the magnetic field strength increases.

Consider the explicit solution (\ref{eq:constantfield_explicitsolution}) for $d = 2$ and $p = 3$ from Theorem \ref{thm:blowup_NLMS}. The condition for $g(t) < 0$ at some time $t \in (0 , T_*)$ is given by
\begin{align*}
\left( g_0 - \frac{F_{\mr{S}}[\psi_0]}{2 B^2} \right)^2 + \left( \frac{\dot{g}_0}{2B} \right)^2 > \left( \frac{F_{\mr{S}}[\psi_0]}{2 B^2} \right)^2 .
\end{align*}
After some algebra this simplifies to
\begin{align}\label{eq:condition_for_blowup_p=3_d=2_(1)}
F_{\mr{S}}[\psi_0] g_0 < B^2 \left( g_0^2 + \frac{\dot{g}_0^2}{4 B^2} \right) .
\end{align}
Observe that (\ref{eq:condition_for_blowup_p=3_d=2_(1)}) is much weaker than demanding $F_{\mr{S}}[\psi_0] < 0$ for blow-up to occur. Consider the special case $\dot{g}_0 = 0$. Using the definition (\ref{def:blowup_functional_schr}) of $F_{\mr{S}}[\psi_0]$ and choosing the symmetric gauge $\v{A} = \frac{B}{2} \v{x}_{\perp}$, (\ref{eq:condition_for_blowup_p=3_d=2_(1)}) then reduces to $E_{\mr{S}} [\psi_0 , \v{0}] < 0$. This is clearly only satisfied in the focusing case $\mu < 0$. 

In the general case $\dot{g}_0 \neq 0$, (\ref{eq:condition_for_blowup_p=3_d=2_(1)}) reduces to
\begin{align}\label{eq:condition_for_blowup_p=3_d=2_(2)}
F_{\mr{S}} [\psi_0] - \frac{B^2}{4} \| \rho \psi_0 \|_{L^2 (\R^2)}^2 < \frac{\re{ \langle \v{x} \psi_0 , (\v{p} + \v{A}) \psi_0 \rangle^2_{L^2 (\R^2)} }}{\| \rho \psi_0 \|_{L^2 (\R^2)}^2} .
\end{align}
Choosing again the symmetric gauge $\v{A} = \frac{B}{2} \v{x}_{\perp}$ and using the definition (\ref{def:NLMS_energy}) of $E_{\mr{S}}$, then the expression (\ref{eq:condition_for_blowup_p=3_d=2_(2)}) further reduces to 
\begin{align}\label{eq:condition_for_blowup_p=3_d=2_symmetric}
E_{\mr{S}} [\psi_0 , \v{0}] < \frac{\re{ \langle \v{x} \psi_0 , \v{p}  \psi_0 \rangle^2_{L^2 (\R^2)}}}{\| \rho \psi_0 \|_{L^2 (\R^2)}^2} .
\end{align}
Since $\re{ \langle \v{x} \psi_0 , \v{p} \psi_0 \rangle_{L^2 (\R^2)}} \leq \| \rho \psi_0 \|_{L^2 (\R^2)} \| \nabla \psi_0 \|_{L^2 (\R^2)}$ we see that the inequality (\ref{eq:condition_for_blowup_p=3_d=2_symmetric}) is only satisfied if $\mu < 0$. We note these observations are a consistency check with the earlier observation that the defocusing $H^1_{\!\v{A}} (\R^2)$-subcritical NLMS equation is globally well-posed.  

On a final note for this section, the sufficient condition for blow-up, namely $F_{\mr{S}} [\psi_0] < 0$, is significantly different than the one found in \cite{ribeiro91, garcia2011}. There, the authors claim, based on a virial identity argument, that $E_{\mr{S}} [\psi_0 , \v{A}] < 0$ is sufficient for blow-up to occur. For this reason we spend the next several paragraphs closely analyzing the relationship between $F_{\mr{S}} [\psi_0]$ and $E_{\mr{S}} [\psi_0]$. Choose the symmetric gauge $\v{A} = B \v{x}_{\perp} / 2$, $B > 0$, and fix $\psi_0 \in \Sigma_{\mr{S}}$. By expanding the kinetic energy $T_{\mr{S}} [\psi_0 , \v{A}] = \| (\v{p} + \v{A}) \psi_0 \|_{L^2 (\R^d)}^2$ we have that
\begin{align}\label{eq:FS_expanded}
F_{\mr{S}} [\psi_0 , \v{A}] = E_0 + \frac{B^2}{4} \| \rho \psi_0 \|_{L^2 (\R^d)}^2 ,
\end{align}
and
\begin{align}\label{eq:ES_expanded}
E_{\mr{S}} [\psi_0 , \v{A}] = E_0 - B \langle L_3 \rangle_0 + \frac{B^2}{4} \| \rho \psi_0 \|_{L^2 (\R^d)}^2 ,
\end{align}
where $E_0 \equiv E_{\mr{S}} [\psi_0 , \v{0}] = \| \nabla \psi_0 \|_{L^2 (\R^d)}^2 - \frac{2}{p+1} \| \psi_0 \|_{L^{p+1}(\R^d)}^{p+1}$ and $\langle L_3 \rangle_{0} \equiv \langle L_3 \psi_0 , \psi_0 \rangle_{L^2 (\R^d)}$. Clearly, for $B = 0$, $F_{\mr{S}} [\psi_0 , \v{0}] = E_0$. From the expressions (\ref{eq:FS_expanded}) and (\ref{eq:ES_expanded}) we can immediately conclude
\begin{align*}
\left\lbrace \begin{array}{lc}
F_{\mr{S}} [\psi_0 , \v{A}] < E_{\mr{S}} [\psi_0 , \v{A}] , & \text{when} ~ \langle L_3 \rangle_{0} < 0, \\
F_{\mr{S}} [\psi_0 , \v{A}] > E_{\mr{S}} [\psi_0 , \v{A}] , & \text{when} ~ \langle L_3 \rangle_{0} > 0, \\
F_{\mr{S}} [\psi_0 , \v{A}] = E_{\mr{S}} [\psi_0 , \v{A}] , & \text{when} ~ \langle L_3 \rangle_{0} = 0.
\end{array} \right.
\end{align*}

The previous inequalities suggest that, when $\langle L_3 \rangle_{0} < 0$, it is possible to derive a general criterion on the magnetic field strength $B > 0$ and the initial data $\psi_0 \in \Sigma_{\mr{S}}$ that gives $E_{\mr{S}} [\psi_0 , \v{A}] > 0$ and $F_{\mr{S}} [\psi_0 , \v{A}] < 0$, and vice versa when $\langle L_3 \rangle_0 > 0$. We first note that it is clear from (\ref{eq:FS_expanded}) that we must assume $E_0 < 0$ for $F_{\mr{S}} [\psi_0 , \v{A}] < 0$ to be possible. Assuming $E_0 < 0$, from (\ref{eq:FS_expanded}) we see that if $B^2 < 4 |E_0| / \| \rho \psi_0 \|_{L^2 (\R^d)}^2$, then $F_{\mr{S}} [\psi_0 , \v{A}] < 0$, when $E_{\mr{S}} [\psi_0 , \v{0}] < 0$. Furthermore, assuming $\langle L_3 \rangle_{0} < 0$, from (\ref{eq:ES_expanded}) we observe that to have $E_{\mr{S}} [\psi_0 , \v{A}] > 0$ we must have
\begin{align*}
B |\langle L_3 \rangle_{0}| + \frac{B^2}{4} \| \rho \psi_0 \|_{L^2 (\R^d)}^2 > | E_0 |  .
\end{align*}
Using this estimate to ensure $E_{\mr{S}} [\psi_0 , \v{A}] > 0$ we choose $B > |E_0 / \langle L_3 \rangle_{0}|$. Therefore, for $E_{\mr{S}} [\psi_0 , \v{A}] > 0$ and $F_{\mr{S}} [\psi_0 , \v{A}] < 0$, we may choose $B > 0$ such that
\begin{align*}
\left| \frac{E_0}{\langle L_3 \rangle_{0}} \right| < B < \frac{2 \sqrt{|E_0|}}{  \| \rho \psi_0 \|_{L^2 (\R^d)} } .
\end{align*}
Thus, for such a $B > 0$ to exist, it will be necessary to have 
\begin{align*}
 \sqrt{ \left| E_0 \right| } \| \rho \psi_0 \|_{L^2 (\R^d)} < 2 |\langle L_3 \rangle_{0}| . 
\end{align*}
Such an inequality can certainly be satisfied. Indeed, take $d = 2$, $p = 5$, $\psi_0 (\rho , \vartheta) = u(\rho) e^{- i \vartheta}$ as an $L_3$-eigenstate with eigenvalue $- 1$ with 
\begin{align*}
u (\rho) = \frac{800 \rho}{\sqrt{\pi}} e^{- 400 \rho^2} . 
\end{align*}
Then, for this state, $E_0 = 1600(1 - \frac{800}{81 \pi^2} )$ and $| \langle L_3 \rangle_{0} |^2 / \| \rho \psi_0 \|_{L^2 (\R^d)}^2 = 800 \pi$, and for any $2 < B < 106$ will produce a positive $E_{\mr{S}} [\psi_0 , \v{A}]$, but negative $F_{\mr{S}} [\psi_0 , \v{A}]$. 

\subsection{Virial Identity and Proof of Main Result}\label{sub:virial}

The virial identities in this section for the NLMS equation (\ref{eq:NLMS}) are already present in the literature. The linear case in any space dimension $d \geq 2$ is covered in \cite[Theorem 1.2]{fanelli_vega_09}, while the non-linear generalization can be found in \cite[Theorem 3.1]{garcia2011}. We rederive these identities for completeness, as well as express them in a form that will be useful for the proof of Theorem \ref{thm:blowup_NLMS}. We treat the case of any dimension $d \geq 2$ and a general, time-independent, external magnetic field (i.e., not necessarily a uniform field). The vector potential $\v{A} : \R^d \rightarrow \R^d$ generates the matrix-valued magnetic field $\v{B} : \R^d \rightarrow M_{n \times n} (\R)$ with components $B_{ij} = \partial_j A_i - \partial_i A_j$. We record this first virial identity as the following Lemma. \begin{lem}[\cite{fanelli_vega_09, garcia2011}]\label{lem:NLMS_Virial}
Let $g (t) = \frac{1}{4} \| \v{x} \psi (t) \|_{L^2 (\R^d)}^2$. Then, for any solution $\psi$ to the NLMS equation (\ref{eq:NLMS}) with initial data $\psi_0 \in H_{\!\v{A}}^2 (\R^d ; \C)$, the following virial identity holds:
\begin{align}\label{eq:xsquared_general_dimension}
\ddot{g} = 2 T_{\mr{S}} [\psi , \v{A}] + \mu d \frac{p-1}{p+1} \| \psi \|_{L^{p+1} (\R^d)}^{p+1} - 2 \re{ \langle \v{B} \v{x} \psi , (\v{p} + \v{A}) \psi \rangle_{L^2 (\R^d)} } . 
\end{align}
\end{lem}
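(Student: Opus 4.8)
The plan is to differentiate $g(t)=\frac{1}{4}\|\v{x}\psi(t)\|_{L^2(\R^d)}^2=\frac14\langle\psi,|\v{x}|^2\psi\rangle$ twice in $t$, each time substituting the equation (\ref{eq:NLMS}) written as $i\partial_t\psi=(\v{D}^2+N)\psi$ with $\v{D}:=\v{p}+\v{A}$ and $N:=\mu|\psi|^{p-1}$ a real multiplication operator, and to reduce every term to a commutator with $\v{D}$. For the first derivative, $\dot g=\frac12\re\langle\psi,|\v{x}|^2\partial_t\psi\rangle=\frac12\im\langle\psi,|\v{x}|^2(\v{D}^2+N)\psi\rangle$; since $|\v{x}|^2$ and $N$ are real, the $N$-contribution is real and drops, and the remainder equals $-\frac{1}{4i}\langle\psi,[\v{D}^2,|\v{x}|^2]\psi\rangle$. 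Because $\v{A}$ commutes with $|\v{x}|^2$ the magnetic field does not enter here: $[\v{D}^2,|\v{x}|^2]=-2i(\v{x}\cdot\v{D}+\v{D}\cdot\v{x})$, and one arrives at $\dot g=\re\langle\v{x}\psi,(\v{p}+\v{A})\psi\rangle$ (the operators $\v{x}\cdot\v{D}$ and $\v{D}\cdot\v{x}$ differ by the constant $-id$, which is pure imaginary and disappears upon taking real parts).

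Then I would differentiate again. Since $\langle\v{x}\psi,\v{D}\psi\rangle=\langle\psi,\v{x}\cdot\v{D}\psi\rangle$ has constant imaginary part $\frac{d}{2}\|\psi\|_{L^2(\R^d)}^2$ (by conservation of charge), we get $\ddot g=\frac{\dd}{\dd t}\langle\psi,\v{x}\cdot\v{D}\psi\rangle=i\langle\psi,[\v{D}^2+N,\v{x}\cdot\v{D}]\psi\rangle$, which I split as $[\v{D}^2,\v{x}\cdot\v{D}]+[N,\v{x}\cdot\v{D}]$. The nonlinear commutator is the easy one: $N$ commutes with $\v{x}$, so $[N,\v{x}\cdot\v{D}]=\sum_jx_j[N,D_j]=i\,\v{x}\cdot\nabla N$ as a multiplication operator, hence $i\langle\psi,[N,\v{x}\cdot\v{D}]\psi\rangle=-\int_{\R^d}|\psi|^2\,\v{x}\cdot\nabla N$; inserting $N=\mu|\psi|^{p-1}$, using $|\psi|^2\,\v{x}\cdot\nabla|\psi|^{p-1}=\frac{p-1}{p+1}\,\v{x}\cdot\nabla|\psi|^{p+1}$ and the integration by parts $\int_{\R^d}\v{x}\cdot\nabla f=-d\int_{\R^d}f$, this term becomes $\mu d\frac{p-1}{p+1}\|\psi\|_{L^{p+1}(\R^d)}^{p+1}$.

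The kinetic commutator is where the field $\v{B}$ appears, and it is the step I expect to be the main bookkeeping obstacle. From $[\v{D}^2,x_j]=-2iD_j$ and, using $[D_k,D_j]=-iB_{jk}$, the identity $[\v{D}^2,D_j]=-i\sum_k(D_kB_{jk}+B_{jk}D_k)$, one finds $[\v{D}^2,\v{x}\cdot\v{D}]=-2i\v{D}^2-i\sum_{j,k}x_j(D_kB_{jk}+B_{jk}D_k)$. Multiplying by $i$ and taking the expectation in $\psi$, the first term gives $2\|\v{D}\psi\|_{L^2(\R^d)}^2=2T_{\mr{S}}[\psi,\v{A}]$; for the second, writing $(\v{B}\v{x})_i=\sum_jB_{ij}x_j$ and using the antisymmetry $B_{jk}=-B_{kj}$, the vanishing of the trace $\sum_iB_{ii}=0$, and the self-adjointness of each $D_k$, the double sum symmetrizes into precisely $-2\re\langle\v{B}\v{x}\psi,(\v{p}+\v{A})\psi\rangle$. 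Combining the three contributions produces (\ref{eq:xsquared_general_dimension}).

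All of the above is formal; the remaining obstacle is to justify differentiating under the integral sign and commuting unbounded operators. For $\psi_0\in H_{\!\v{A}}^2$, Theorem \ref{thm:CE88} provides $\psi\in C([0,T_*);H_{\!\v{A}}^2)\cap C^1([0,T_*);L^2)$, so the equation holds strongly; one then regularizes the weight $|\v{x}|^2$ by $|\v{x}|^2(1+\varepsilon|\v{x}|^2)^{-1}$ (or a smooth cutoff), derives the identity for the truncated functional where all the manipulations are legitimate, and passes $\varepsilon\downarrow0$ using dominated convergence together with the explicit commutator bounds. This limiting argument is standard and is carried out in \cite{fanelli_vega_09, garcia2011}, so I would either reproduce it briefly or simply cite it.
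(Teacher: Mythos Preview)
Your argument is correct: the commutator algebra with the magnetic CCR $[D_j,D_k]=-iB_{jk}$ closes exactly as you describe, and the bookkeeping in the $\v{B}$ term works because $\sum_{j,k}[x_j,D_kB_{kj}]=i\sum_jB_{jj}=0$ by antisymmetry. The nonlinear term and the regularity justification are handled in the standard way.

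The paper organizes the computation differently. Instead of working with the gauge-covariant generator $\v{x}\cdot(\v{p}+\v{A})$ throughout, it splits this as $D+\v{x}\cdot\v{A}$, where $D=\tfrac12(\v{x}\cdot\v{p}+\v{p}\cdot\v{x})$ is the dilation operator (so beware the notational clash with your $\v{D}$). The commutator $i[H_{\mr{S}},D]$ is then obtained not by direct algebra but by differentiating the conjugation $e^{-i\theta D}H_{\mr{S}}e^{i\theta D}$ at $\theta=0$, which produces $2\gvpi^2$ plus a $\theta$-derivative of $\gvpi_\theta^2$ with $\v{A}_\theta(\v{x})=e^{-\theta}\v{A}(e^{-\theta}\v{x})$; the companion commutator $i[H_{\mr{S}},\v{x}\cdot\v{A}]$ is computed directly and contributes exactly the opposite $\theta$-derivative, so those terms cancel and only the $\v{B}\v{x}$ piece survives. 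Your route is more elementary and manifestly gauge-covariant, never separating $\v{A}$ from $\v{p}$; the paper's route makes the scaling structure behind the virial identity explicit and transports without change to the Pauli case, where $[\gvsig\cdot\gvpi]^2$ scales the same way. Both land on the same identity.
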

\begin{proof}
By Theorem \ref{thm:CE88} the corresponding solution $\psi \in C([0,T_*) , H_{\!\v{A}}^2) \cap C^1 ([0,T_*) , L^2)$ and therefore all the following computations are justified. Consider the function $f(t) = \langle \psi (t) , G (\v{x}) \psi (t) \rangle$, where $G : \R^d \rightarrow \R$ is a differentiable, radial multiplier to be specified later. We denote $H_{\mr{S}} = \gvpi^2 + \mu |\psi|^{p-1}$ where $\gvpi = \v{p} + \v{A}$. Using the identity $[A^2 , B] = A [A,B] + [A,B] A$ and taking the time derivative of $f$ we find 
\begin{align*}
\dot{f} & =\langle i [ \gvpi^2 , G ] \psi , \psi \rangle_{L^2(\R^d)} \\
& =  \langle i ( \gvpi \cdot [ \v{p} , G ] + [ \v{p} , G ] \cdot \gvpi ) \psi , \psi \rangle_{L^2(\R^d)} \\
& = 2 \re{ \langle \nabla G \cdot \gvpi \psi , \psi \rangle_{L^2(\R^d)}} .
\end{align*} 
Choosing $G (\v{x}) = |\v{x}|^2$ we arrive at
\begin{align}
\dot{g} = \re{ \langle \v{x} \cdot \gvpi \psi , \psi \rangle_{L^2(\R^d)}} = \langle ( D + \v{x} \cdot \v{A}) \psi , \psi \rangle_{L^2(\R^d)},
\end{align}
where $D = (\v{x} \cdot \v{p} + \v{p} \cdot \v{x} ) / 2$ is the dilation operator.

For the second time derivative we find
\begin{align}\label{eq:varia_1}
\ddot{g} & = \langle i [ H_{\mr{S}} , (D + \v{x} \cdot \v{A}) ] \psi , \psi \rangle_{L^2(\R^d)}.
\end{align}
Recall that
\begin{align*}
\frac{\dd }{\dd \theta} e^{- i \theta D} H_{\mr{S}} e^{i \theta D} \Big|_{\theta = 0} = i [ H_{\mr{S}} , D ] ,
\end{align*}
and that $(e^{i \theta D} f)(\v{x}) = e^{d \theta /2} f(e^{\theta} \v{x})$. Our task is to compute $e^{-i \theta D} H_{\mr{S}} e^{i \theta D}$. This is straightforward. For example, for a suitable $f$, we have
\begin{align*}
e^{-i \theta D} \gvpi e^{i \theta D} f (\v{x}) = ( (e^{\theta} \v{p} + \v{A} (e^{-\theta} \cdot)) f ) (\v{x}) ,
\end{align*}
which implies $e^{-i \theta D} \gvpi^2 e^{i \theta D} = e^{2 \theta} \gvpi_{\theta}^2$, where $\v{A}_{\theta} (\v{x}) = e^{-\theta} \v{A} (e^{-\theta} \v{x})$ and $\gvpi_{\theta} = \v{p} + \v{A}_{\theta}$. Similar computations yield
\begin{align*}
e^{-i \theta D} H_{\mr{S}} e^{i \theta D} & = e^{2 \theta} \gvpi_{\theta}^2 + \mu |\psi (e^{- \theta} \cdot )|^{p-1} .
\end{align*} 
Differentiating the previous expression with respect to $\theta$ and evaluating it at $\theta = 0$ we find
\begin{align}\label{eq:varia_2}
i [H_{\mr{S}} , D] & = 2 \gvpi^2 - \mu (\v{x} \cdot \nabla ) |\psi|^{p-1} + \frac{\dd}{\dd \theta} \gvpi_{\theta}^2 \Big|_{\theta = 0} .
\end{align}

To complete the computation of (\ref{eq:varia_1}) we must workout the commutator $[H_{\mr{S}} , \v{x} \cdot \v{A}]$, which reduces to $ [\gvpi^2 , \v{x} \cdot \v{A}]$. One finds
\begin{align*}
[\gvpi^2 , \v{x} \cdot \v{A}] = \gvpi \cdot [\v{p} , \v{x} \cdot \v{A}] + [\v{p} , \v{x} \cdot \v{A}] \cdot \gvpi ,
\end{align*}
and
\begin{align*}
[\nabla , \v{x} \cdot \v{A}] & = \nabla (\v{x} \cdot \v{A}) = \v{A} + (\v{x} \cdot \nabla) \v{A} - \sum_{ij} x_j B_{ij} \v{e}_i .
\end{align*}
Since
\begin{align*}
\v{A} (\v{x}) + (\v{x} \cdot \nabla) \v{A} (\v{x}) = - \frac{\dd}{\dd \theta} \Big|_{\theta = 0} \v{A}_{\theta} (\v{x}) ,
\end{align*}
we conclude that
\begin{align}\label{eq:varia_3}
i [ H_{\mr{S}} , \v{x} \cdot \v{A}]  = - \frac{\dd}{\dd \theta} \Big|_{\theta = 0} \gvpi_{\theta}^2 - \left( \gvpi \cdot \v{B} \v{x} + \v{B} \v{x} \cdot \gvpi \right) .
\end{align}
Combining (\ref{eq:varia_1}), (\ref{eq:varia_2}), and (\ref{eq:varia_3}) we conclude (\ref{eq:xsquared_general_dimension}).  
\end{proof}

\begin{proof}[Proof of Lemma \ref{lem:conservation_of_Fs} and Theorem \ref{thm:blowup_NLMS}]
We specialize to the case $d = 3$, as $d = 2$ is similar. We will consider $\psi_0 \in H^2_{\!\v{A}} (\R^3) \cap \Sigma_{\mr{S}}$, as the general case of $\psi_0 \in \Sigma_{\mr{S}}$ will follow from the continuous dependence portion of Theorem \ref{thm:CE88}. Again, we denote $\gvpi = \v{p} + \v{A}$. Begin by noting that, in dimensions $d \in \{2,3\}$, with $\v{B} = (0,0,B)$ a uniform field, (\ref{eq:xsquared_general_dimension}) becomes
\begin{align}\label{eq:xsquared_3_dimension}
\ddot{g} = 2 T_{\mr{S}} [\psi , \v{A}] + \mu d \frac{p-1}{p+1} \| \psi \|_{L^{p+1} (\R^d)}^{p+1} - 2 \re{ \langle B \v{x}_{\perp} \psi , \gvpi \psi \rangle_{L^2 (\R^d)} } .
\end{align} 
Therefore, the proof of Lemma \ref{lem:conservation_of_Fs} boils down to showing 
\begin{align*}
- B \re{ \langle \v{x}_{\perp} \psi , \gvpi \psi \rangle_{L^2 (\R^3)} } + \frac{B^2}{2} \| \rho \psi \|_{L^2 (\R^3)}^2
\end{align*}
is conserved. We start by computing the time derivative of $\langle \psi , \v{x}_{\perp} \cdot \gvpi \psi \rangle_{L^2(\R^d)}$. We first note
\begin{align}\label{eq:proof_thmNLMS_2}
\frac{\dd }{\dd t} \langle \psi , \v{x}_{\perp} \cdot \gvpi \psi \rangle_{L^2(\R^3)} = \langle i [ \gvpi^2 , \v{x}_{\perp} \cdot \gvpi ] \psi , \psi \rangle_{L^2(\R^3)} + \mu \langle i (L_3 |\psi|^{p-1}) \psi , \psi \rangle_{L^2 (\R^3)} ,
\end{align}
where we recall that $L_3 \equiv \v{x}_{\perp} \cdot \v{p} = - i \partial_{\vartheta}$. The second commutator on the right hand side of (\ref{eq:proof_thmNLMS_2}) is straightforward to compute:
\begin{align*}
\langle i (L_3 |\psi|^{p-1}) \psi , \psi \rangle_{L^2 (\R^3)} = \frac{p-1}{p+1} \int_{\R^3} \partial_{\vartheta} |\psi|^{p+1} = 0 . 
\end{align*}

To compute the first commutator on the right hand side of (\ref{eq:proof_thmNLMS_2}) we note 
\begin{align*}
[ \gvpi^2 , \v{x}_{\perp} \cdot \gvpi ] = \sum_{j,k=1}^3 \left( \pi_j [ \pi_j , (\v{x}_{\perp})_k \pi_k]  + [ \pi_j , (\v{x}_{\perp})_k \pi_k] \pi_j \right) . 
\end{align*}
Noting that $[\pi_3 , \pi_k] = - i ( \partial_3 A_k - \partial_k A_3 ) = 0$ in a uniform magnetic field directed along the $x_3$-axis, the above sum reduces to
\begin{align*}
[ \gvpi^2 , \v{x}_{\perp} \cdot \gvpi ] & = \sum_{j,k=1}^2 \left( \pi_j [ \pi_j , (\v{x}_{\perp})_k \pi_k] + [ \pi_j , (\v{x}_{\perp})_k \pi_k] \pi_j \right) \\
& = \pi_1 [ \pi_1 , x_1 \pi_2 ] - \pi_2 [ \pi_2 , x_2 \pi_1 ] . \numberthis \label{eq:proof_thmNLMS_3} 
\end{align*}
Since $\partial_1 A_2 - \partial_2 A_1 = B$, for the first commutator in (\ref{eq:proof_thmNLMS_3}) we find
\begin{align*}
i [ \pi_1 , x_1 \pi_2 ] =  \pi_2 + i x_1 [\pi_1 , \pi_2] = \pi_2 + B x_1 , 
\end{align*}
and for the second commutator in (\ref{eq:proof_thmNLMS_3}) we find
\begin{align*}
i [ \pi_2 , x_2 \pi_1 ] =  \pi_1 + i x_2 [\pi_2 , \pi_1] = \pi_1 - B x_2 .
\end{align*}
Plugging the previous two commutators back into (\ref{eq:proof_thmNLMS_3}) we conclude
\begin{align*}
i [ \gvpi^2 , \v{x}_{\perp} \cdot \gvpi ] & = \pi_1 \left( \pi_2 + B x_1  \right) - \pi_2 \left( (p_1 + A_2) - B x_2 \right) \\
& = - 2 i B + 2  B (x_1 , x_2 , 0) \cdot \v{A} + 2 B (x_1 , x_2 , 0) \cdot \v{p} .
\end{align*}
Therefore, (\ref{eq:proof_thmNLMS_2}) becomes
\begin{align}\label{eq:proof_thmNLMS_4}
\frac{\dd }{\dd t} \re{ \langle \v{x}_{\perp} \psi , \gvpi \psi \rangle_{L^2(\R^3)}} = 2 B \re{ \langle (x_1 , x_2 , 0) \cdot ( \v{p} + \v{A} ) \psi , \psi \rangle_{L^2(\R^3)}} .
\end{align}
It is easily verified that the right hand side of (\ref{eq:proof_thmNLMS_4}) is proportional to the time derivative of $\| \rho \psi \|_{L^2 (\R^3)}^2$, where $\rho^2 = x_1^2 + x_2^2$. That is, we have the desired identity
\begin{align}\label{eq:proof_thmNLMS_5} 
\frac{\dd }{\dd t} \re{ \langle \v{x}_{\perp} \psi , \gvpi \psi \rangle_{L^2(\R^3)}} = \frac{B}{2} \frac{\dd}{\dd t} \| \rho \psi \|_{L^2 (\R^3)}^2 .  
\end{align}
Finishing the proof of Theorem \ref{thm:blowup_NLMS} simply amounts to rewriting the identity (\ref{eq:xsquared_3_dimension}) and using (\ref{eq:proof_thmNLMS_5}).
\end{proof}

\section{The Non-linear Pauli Equation}\label{sec:NLP}

In this final section, we consider generalizing earlier results on the NLMS equation to the non-linear Pauli (NLP) equation. In space dimensions $d \in \{2,3\}$, the NLP equation\footnote{Note that for $d = 2$, the NLP equation is not equivalent to the NLMS equation via a gauge transformation. This is a consequence of the non-linearity "mixing" the components of $\psi$.} reads
\begin{align}\label{eq:NLP}
\left\lbrace  \begin{array}{l}
i \partial_t \psi = [ \gvsig \cdot (\v{p} + \v{A})]^2 \psi + \mu | \psi|^{p-1} \psi \\
\psi (0 , \v{x}) = \psi_0 (\v{x}) ,
\end{array} \right. 
\end{align}
where $\psi : \R^d \rightarrow \C^2$, and
\begin{align*}
\gvsig = \left\lbrace \begin{array}{cc}
(\sigma_1 , \sigma_2 , \sigma_3) , & d = 3 , \\
(\sigma_1 , \sigma_2) , & d = 2 ,
\end{array}  \right. 
\end{align*}
is the vector of Pauli matrices, which are $2 \times 2$ Hermitian matrices assumed to satisfy the commutation relations $[\sigma^j , \sigma^k] = 2 i \epsilon_{jk\ell} \sigma^{\ell}$ and anticommutation relations $\left\lbrace \sigma^j , \sigma^k \right\rbrace = 2 \delta_{jk} I$. A typical representation is
\begin{align*}
\sigma_1 = \left( \begin{array}{cc}
0 & 1 \\
1 & 0 
\end{array} \right) , \hspace{1cm} \sigma_2 = \left( \begin{array}{cc}
0 & -i \\
i & 0 
\end{array} \right) , \hspace{1cm} \sigma_3 = \left( \begin{array}{cc}
1 & 0 \\
0 & - 1
\end{array} \right) .
\end{align*}
We typically consider (\ref{eq:NLP}) as an initial value problem in the space $H^1_{\v{A}} (\R^d ; \C^2)$, which is the obvious generalization of the space $H_{\v{A}}^1 (\R^d ; \C)$ discussed in \S\ref{sec:NLMS}. Again, we will always assume $\v{A} \in L^2_{\mr{loc}} (\R^d ; \R^d)$ is such that $[\gvsig \cdot (\v{p} + \v{A})]^2$ is an essentially self-adjoint operator on $L^2 (\R^d ; \C^2)$ with domain $H_{\!\v{A}}^2 (\R^d ; \C^2)$. The total energy of (\ref{eq:NLP}) is
\begin{align}\label{def:total_mag_energy_pauli}
E_{\mr{P}} [\psi , \v{A}] (t) =  T_{\mr{P}} [\psi, \v{A}] + \frac{2 \mu}{p+1} \| \psi (t) \|_{L^{p+1} (\R^d ; \C^2)}^{p+1} ,
\end{align}
respectively, where $T_{\mr{P}} [\psi, \v{A}] = \| \gvsig \cdot (\v{p} + \v{A}) \psi \|_{L^2 (\R^d ; \C^2)}^2$ is the total Pauli kinetic energy. Again, we usually surpress the $\v{A}$-dependence of $E_{\mr{P}}$ and $T_{\mr{P}}$. At least formally, the $L^2$-norm $\| \psi (t) \|_{L^2 (\R^d ; \C^2)}^2$ and the total energy (\ref{def:total_mag_energy_pauli}) are conversed along the flow generated by (\ref{eq:NLP}). 

Consider the case of a uniform magnetic field $\v{B} = (0,0,B)$, $B \in \R \backslash \{0\}$. Using the algebraic properties of the Pauli matrices we note that 
\begin{align*}
[\gvsig \cdot (\v{p} + \v{A})]^2 = (\v{p} + \v{A})^2 + B \sigma_3 .
\end{align*}
As a consequence, we have that the unitary time evolution for the Pauli operator $U_{\mr{P}} (t) = \exp{ \{ - i t [ \gvsig \cdot (\v{p} + \v{A}) ]^2 \} }$ is equal to
\begin{align*}
U_{\mr{P}} (t) = e^{- i B t \sigma_3} U_{\mr{S}} (t) ,
\end{align*}
where $U_{\mr{S}} (t)$ is given by (\ref{eq:unitary_propagator_schr_constant_field}). Hence, the estimates (\ref{eq:US-Lp_estimate}), (\ref{eq:homogeneous_Strichartz_S_constant_field_3D}), and (\ref{eq:nonhomogeneous_Strichartz_S_constant_field_3D}) continue to hold with $U_{\mr{S}} (t)$ replaced with $U_{\mr{P}} (t)$. Likewise, Theorem \ref{thm:strichartz_identity} easily generalizes to $U_{\mr{P}} (t)$ in $d = 2$ dimensions. Therefore, using the same proof that was used for Theorem \ref{thm:CE88}, we have the following Theorem.
\begin{thm}\label{thm:local_wellposed_Pauli}
Let $d \in \{2,3\}$, $\mu \in \R$, $1 < p < 1 + 4 / (d-2)$, and $\v{A} = B \v{x}_{\perp} / 2$. For all $\psi_0 \in H_{\!\v{A}}^1 (\R^d ; \C^2)$ we have the following.
\begin{enumerate}
\item There exists a unique maximal solution $\psi \in C([0,T_*) , H_{\!\v{A}}^1) \cap C^1 ([0,T_*) , H_{\!\v{A}}^{-1})$ of (\ref{eq:NLP}). If $T_* < \infty$, then $\| (\v{p} + \v{A}) \psi (t) \|_{L^2 (\R^d)} \rightarrow \infty$ at $t \uparrow T_*$.
\item The mapping $\psi_0 \mapsto T_* (\psi_0)$ is lower semi-continuous and, if $t \in [0 , T_* (\psi_0))$ and $( \phi_n )_{n \geq 1} \subset H^1_{\!\v{A}}$ converges to $\psi_0$ as $n \rightarrow \infty$, in $H_{\!\v{A}}^1$, then the corresponding sequence of solutions $(\psi_n)_{n \geq 1}$ to (\ref{eq:NLP}) verify $\psi_n \rightarrow \psi$ as $n \rightarrow \infty$, in $C([0,t] , H_{\!\v{A}}^1)$.
\item If $\psi_0 \in H^2_{\!\v{A}}$, then $\psi \in C([0,T_*) , H_{\!\v{A}}^2) \cap C^1 ([0,T_*) , L^2)$.
\item $\| \psi (t) \|_{L^2 (\R^d)} = \| \psi_0 \|_{L^2 (\R^d)}$ and $E_{\mr{P}} [\psi (t) , \v{A}] = E_{\mr{P}} [\psi_0, \v{A}]$. 
\end{enumerate}
\end{thm}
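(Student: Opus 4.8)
The plan is to run, essentially verbatim, the fixed-point argument of Cazenave--Esteban underlying Theorem \ref{thm:CE88}, the only genuinely new ingredient being bookkeeping for the $\C^2$-valued structure. Throughout, $|\psi(\v x)|$ denotes the Euclidean norm of $\psi(\v x)\in\C^2$. We use the identity $[\gvsig\cdot(\v p+\v A)]^2=(\v p+\v A)^2+B\sigma_3$ and the consequent factorization $U_{\mr P}(t)=e^{-iBt\sigma_3}U_{\mr S}(t)$. Since $e^{-iBt\sigma_3}$ is, for each $t$, an $\v x$-independent unitary $2\times2$ matrix, one has $|(U_{\mr P}(t)f)(\v x)|=|(U_{\mr S}(t)f)(\v x)|$ for a.e.\ $\v x$, so every mixed-norm bound on $U_{\mr S}$ transfers with the same constant to $U_{\mr P}$. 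In particular, in $d=3$ the Strichartz estimates (\ref{eq:homogeneous_Strichartz_S_constant_field_3D})--(\ref{eq:nonhomogeneous_Strichartz_S_constant_field_3D}) hold with $U_{\mr S}$ replaced by $U_{\mr P}$, and in $d=2$ the same is true of the identity of Theorem \ref{thm:strichartz_identity}, as already noted in the text preceding (\ref{eq:NLP}).

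With these estimates in hand, I would set up the Duhamel formulation
\[
\psi(t)=U_{\mr P}(t)\psi_0-i\mu\int_0^t U_{\mr P}(t-\tau)\,|\psi(\tau)|^{p-1}\psi(\tau)\,\dd\tau
\]
and solve it by a contraction mapping in a ball of $C([0,T];H^1_{\!\v A})\cap L^q_t L^r_x([0,T]\times\R^d)$ for a Schr\"odinger-admissible pair $(q,r)$ adapted to $p$ (the standard Kato/Cazenave choice), running the same estimate simultaneously for $(\v p+\v A)\psi$. The pointwise inequalities $\bigl||z|^{p-1}z-|w|^{p-1}w\bigr|\lesssim(|z|^{p-1}+|w|^{p-1})|z-w|$ and, from the chain rule, $\bigl|(\v p+\v A)(|\psi|^{p-1}\psi)\bigr|\lesssim|\psi|^{p-1}\,|(\v p+\v A)\psi|$ hold for $\C^2$-valued maps exactly as in the scalar case — the latter using that the $\v A$-part of $\v p+\v A$ is a scalar multiple of the identity, and the diamagnetic bound (\ref{eq:diamagnetic}) to absorb $|\nabla|\psi||$ — so the nonlinear estimates needed for the contraction are unchanged. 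This produces a unique solution on an interval whose length depends only on $\|\psi_0\|_{H^1_{\!\v A}}$; a continuation argument then yields the maximal solution and the blow-up alternative of item (1). Item (2) (lower semicontinuity of $T_*$ and continuous dependence) follows from the same contraction estimates, and item (3) (persistence of $H^2_{\!\v A}$ regularity) from differentiating the Duhamel formula once more and bootstrapping, exactly as in the scalar proof.

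For the conservation laws of item (4), I would first justify them for $\psi_0\in H^2_{\!\v A}(\R^d;\C^2)$, where $\psi\in C([0,T_*);H^2_{\!\v A})\cap C^1([0,T_*);L^2)$ makes the formal computations $\frac{\dd}{\dd t}\|\psi\|_{L^2}^2=0$ and $\frac{\dd}{\dd t}E_{\mr P}[\psi,\v A]=0$ rigorous (here one uses that $B\sigma_3$ is bounded and self-adjoint, and that $E_{\mr P}$ is finite on $H^1_{\!\v A}$ by Gagliardo--Nirenberg combined with the diamagnetic inequality), and then pass to general $\psi_0\in H^1_{\!\v A}$ by density together with the continuous dependence of item (2).

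The argument has no genuinely hard step; if anything requires care it is checking that the two-component ``mixing'' in the nonlinearity (the caveat in the footnote to (\ref{eq:NLP})) does not obstruct the transfer of Cazenave--Esteban's estimates. It does not: the mixing is invisible to the Euclidean norm $|\cdot|_{\C^2}$ that enters all the Strichartz and nonlinear bounds, and the matrix factor $e^{-iBt\sigma_3}$ commutes with $L^r_x$-integration because it is $\v x$-independent and pointwise unitary. The only structural place where the mixing matters is in \S\ref{sec:NLP}'s later use of virial identities, not here.
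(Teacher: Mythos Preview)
Your proposal is correct and follows exactly the approach of the paper, which simply observes the factorization $U_{\mr P}(t)=e^{-iBt\sigma_3}U_{\mr S}(t)$, transfers the Strichartz estimates, and then invokes ``the same proof that was used for Theorem~\ref{thm:CE88}.'' Your write-up is in fact considerably more detailed than what the paper provides, and your explicit remarks about why the $\C^2$-mixing in the nonlinearity is harmless for the contraction estimates are a useful addition.
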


In general, the diamagnetic inequality (\ref{eq:diamagnetic}) no longer holds when $\v{p} + \v{A}$ is replaced by $\gvsig \cdot (\v{p} + \v{A})$. However, in $3$ dimensions, as a result of the estimate
\begin{align*}
\langle \gvsig \cdot \v{B} \psi , \psi \rangle_{L^2(\R^3 ; \C^2)} = B \int_{\R^3} \left( |\psi_1|^2 - |\psi_2|^2 \right) \leq B \| \psi \|_{L^2 (\R^3 ; \C^2)}^2 ,
\end{align*}
we may still obtain a uniform bound on $\| (\v{p} + \v{A}) \psi \|_{L^2 (\R^3 ; \C^2)}$ when $p < 7/3$ in a similar manner as the magnetic Schr\"{o}dinger case discussed in \S\ref{sub:strichartz}. A similar estimate in $2$ dimensions shows the same conclusion holds, but now with $p < 3$. Therefore, by Theorem \ref{thm:local_wellposed_Pauli} we conclude global well-posedness of the Cauchy problem (\ref{eq:NLP}) in the range $1 < p < 1 + 4/d$, $d \in \{2,3\}$, with a uniform magnetic field. 

Our blow-up result for (\ref{eq:NLP}) is very similar to that for the NLMS equation (\ref{eq:NLMS}). To state the result, we introduce the function space
\begin{align*}
\Sigma_{\mr{P}} := \{ f \in H^1_{\!\v{A}} (\R^d ; \C^2) ~ : ~ \int_{\R^d} |\v{x}|^2 |f (\v{x})|^2 \dd \v{x} < \infty \} ,
\end{align*}
and define $F_{\mr{P}} : \Sigma_{\mr{P}} \rightarrow \R$ by
\begin{align}\label{def:blowup_functional_Pauli}
F_{\mr{P}} [\psi] = E_{\mr{P}} [\psi] - B \re{ \langle \gvsig \cdot \v{x}_{\perp} \psi , \gvsig \cdot (\v{p} + \v{A}) \psi \rangle_{L^2(\R^d ; \C^2)} } + \frac{B^2}{2} \| \rho \psi \|_{L^2 (\R^d ; \C^2)}^2 .
\end{align}
Similar to $F_{\mr{S}}$, the next Lemma shows $F_{\mr{P}}$ is conserved under the time evolution of (\ref{eq:NLP}).
\begin{lem}\label{lem:conservation_of_Fp}
Let $d \in \{2,3\}$ and $\v{A} \in L_{\mr{loc}}^2 (\R^d ; \R^d)$ generate a uniform magnetic field $\v{B} = (0,0,B)$. Let $\psi_0 \in H_{\!\v{A}}^1 (\R^d ; \C^2)$ and $\psi \in C([0,T_*) , H_{\!\v{A}}^1) \cap C^1 ([0,T_*) , H_{\!\v{A}}^{-1})$ denote the corresponding solution to (\ref{eq:NLMS}). Then, $F_{\mr{P}} [\psi (t)] = F_{\mr{P}} [\psi_0]$. 
\end{lem}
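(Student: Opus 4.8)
The plan is to run the argument of Lemma~\ref{lem:conservation_of_Fs} in parallel, the two genuinely new ingredients being the uniform-field identity $[\gvsig\cdot(\v{p}+\v{A})]^2=(\v{p}+\v{A})^2+B\sigma_3$ — so that the NLP equation reads $i\partial_t\psi=\gvpi^2\psi+B\sigma_3\psi+\mu|\psi|^{p-1}\psi$ with $\gvpi=\v{p}+\v{A}$ — and the pointwise Pauli identity $(\gvsig\cdot\v{a})(\gvsig\cdot\v{b})=(\v{a}\cdot\v{b})\,I+i\,\gvsig\cdot(\v{a}\times\v{b})$. As in the proof of Lemma~\ref{lem:conservation_of_Fs}, I would first carry out all computations for $\psi_0\in H^2_{\!\v{A}}(\R^d;\C^2)\cap\Sigma_{\mr P}$ — where Theorem~\ref{thm:local_wellposed_Pauli}(3), together with the $\Sigma_{\mr P}$-persistence analogue of the fact recalled after Theorem~\ref{thm:blowup_NLMS}, legitimizes every step — and then pass to general $\psi_0\in\Sigma_{\mr P}$ by the continuous dependence of Theorem~\ref{thm:local_wellposed_Pauli}(2).

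The first step is to collapse $F_{\mr P}$ onto the Schr\"{o}dinger functional. Expanding $(\gvsig\cdot\v{x}_{\perp})(\gvsig\cdot\gvpi)$ with the Pauli identity and evaluating the commutators $[p_k,(x_\perp)_j]=-i\partial_k(x_\perp)_j$ — all of which vanish apart from the combination producing $\partial_1(x_\perp)_2-\partial_2(x_\perp)_1=2$ — a short calculation yields, in both $d=2$ and $d=3$,
\begin{align*}
\re\langle\gvsig\cdot\v{x}_{\perp}\,\psi,\ \gvsig\cdot\gvpi\,\psi\rangle_{L^2(\R^d;\C^2)}=\re\langle\v{x}_{\perp}\psi,\ \gvpi\psi\rangle_{L^2(\R^d;\C^2)}+\langle\psi,\sigma_3\psi\rangle_{L^2(\R^d;\C^2)} .
\end{align*}
Since moreover $T_{\mr P}[\psi,\v{A}]=\|\gvsig\cdot\gvpi\,\psi\|^2=\langle\psi,\gvpi^2\psi\rangle+B\langle\psi,\sigma_3\psi\rangle$, the two $B\langle\psi,\sigma_3\psi\rangle$ terms inside $F_{\mr P}$ cancel and one is left with
\begin{align*}
F_{\mr P}[\psi]=\langle\psi,\gvpi^2\psi\rangle+\frac{2\mu}{p+1}\|\psi\|_{L^{p+1}}^{p+1}-B\,\re\langle\v{x}_{\perp}\psi,\gvpi\psi\rangle+\frac{B^2}{2}\|\rho\psi\|^2 ,
\end{align*}
which is precisely the functional \eqref{def:blowup_functional_schr} read for $\C^2$-valued functions; denote it $F_{\mr S}^{(2)}[\psi]$.

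The second step is to show $F_{\mr S}^{(2)}$ is conserved along the NLP flow. Here I would use the phase change $\phi(t)=e^{iBt\sigma_3}\psi(t)$ already employed in \S\ref{sec:NLP}: because $e^{iBt\sigma_3}$ commutes with $\gvpi^2$ and with $\sigma_3$ and preserves $|\phi(\v{x})|$ pointwise, $\phi$ solves the $\C^2$-valued NLMS equation $i\partial_t\phi=\gvpi^2\phi+\mu|\phi|^{p-1}\phi$ with $\phi(0)=\psi_0$. The same pointwise unitarity makes each of the four terms of $F_{\mr S}^{(2)}$ invariant under $\psi\mapsto e^{i\theta\sigma_3}\psi$, so $F_{\mr S}^{(2)}[\psi(t)]=F_{\mr S}^{(2)}[\phi(t)]$; and $F_{\mr S}^{(2)}[\phi(t)]=F_{\mr S}^{(2)}[\psi_0]$ by Lemma~\ref{lem:conservation_of_Fs}, whose proof (the chain \eqref{eq:proof_thmNLMS_2}--\eqref{eq:proof_thmNLMS_5}) goes through verbatim for $\C^2$-valued solutions — the only place the scalar nature was used is the identity $\int_{\R^d}\partial_\vartheta|\psi|^{p+1}=0$, equally valid with $|\psi|^2=|\psi_1|^2+|\psi_2|^2$. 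Chaining the equalities gives $F_{\mr P}[\psi(t)]=F_{\mr S}^{(2)}[\psi(t)]=F_{\mr S}^{(2)}[\phi(t)]=F_{\mr S}^{(2)}[\psi_0]=F_{\mr P}[\psi_0]$.

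I expect the only real difficulty to be in the first step: pinning down the sign and coefficient of the $\langle\psi,\sigma_3\psi\rangle$ term so that it exactly cancels the $B\sigma_3$-contribution to $T_{\mr P}$, which is just careful bookkeeping with the Pauli relations and the commutators $[(x_\perp)_j,\pi_k]$. Everything else is a verbatim reuse of the NLMS virial computation or the regularity/approximation bookkeeping already invoked there. (One can also bypass the phase change: $B\sigma_3$ commutes with the scalar operators $\v{x}_{\perp}\cdot\gvpi$ and $\rho^2$ and contributes nothing to $\tfrac{\dd}{\dd t}\|\psi\|_{L^{p+1}}^{p+1}$, so identity \eqref{eq:proof_thmNLMS_5} holds unchanged for NLP solutions; combined with conservation of $E_{\mr P}$ and of $\langle\psi,\sigma_3\psi\rangle=\|\psi_1\|_{L^2}^2-\|\psi_2\|_{L^2}^2$, this gives $\tfrac{\dd}{\dd t}F_{\mr P}[\psi(t)]=0$ directly.)
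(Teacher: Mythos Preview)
Your argument is correct, and it takes a genuinely different route from the paper's. The paper proceeds by redoing the commutator calculation of Lemma~\ref{lem:conservation_of_Fs} with the Pauli Hamiltonian, establishing directly the Pauli-dressed analogue of \eqref{eq:proof_thmNLMS_5}, namely
\[
\frac{\dd}{\dd t}\,\re\langle \gvsig\cdot\v{x}_{\perp}\psi,\ \gvsig\cdot(\v{p}+\v{A})\psi\rangle_{L^2(\R^d)}=\frac{B}{2}\frac{\dd}{\dd t}\|\rho\psi\|_{L^2(\R^d)}^2,
\]
and then integrating in time together with conservation of $E_{\mr P}$. You instead first collapse $F_{\mr P}$ algebraically to the $\C^2$-valued Schr\"odinger functional $F_{\mr S}^{(2)}$ via the Pauli identity (your step~1 is correct; in $d=3$ the extra $\sigma_1,\sigma_2$ cross-terms coming from $\v{x}_{\perp}\times\gvpi$ are expectations of self-adjoint operators and drop out upon taking the real part), and then either gauge away the $B\sigma_3$ piece via the spin rotation $\phi=e^{iBt\sigma_3}\psi$ so as to cite Lemma~\ref{lem:conservation_of_Fs} verbatim, or --- in your parenthetical --- note that $B\sigma_3$ commutes with the scalar operators in \eqref{eq:proof_thmNLMS_5} and that $\langle\psi,\sigma_3\psi\rangle$ is conserved. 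Your approach has the advantage of making the reduction $F_{\mr P}=F_{\mr S}^{(2)}$ explicit and of avoiding any new commutator computation; the paper's approach is more direct but effectively reproves \eqref{eq:proof_thmNLMS_5} in the Pauli setting.
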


The proof of Lemma \ref{lem:conservation_of_Fp} is almost identical to that of Lemma \ref{lem:conservation_of_Fs} and is reserved for the end of \S\ref{sub:virial}. Using that $F_{\mr{P}} [\psi]$ is conserved we have the following Theorem concerning the second time derivative of the expectation value of $|\v{x}|^2$.

\begin{thm}\label{thm:blowup_NLP}
Let $d \in \{ 2, 3 \}$, $1 < p < 1 + 4/(d-2)$, $\v{A} \in L_{\mr{loc}}^2 (\R^d ; \R^d)$ generate a uniform magnetic field $\v{B} = (0,0,B)$, and $\psi_0 \in \Sigma_{\mr{P}}$. Let $T_* \in (0 , \infty]$ be the time so that $\psi \in C ( [0, T_*) ; \Sigma_{\mr{P}} ) \cap C^1 ( [0, T_*) ; H_{\!\v{A}}^{-1} )$ is the corresponding maximal solution to the Cauchy problem (\ref{eq:NLP}). Then the function $g (t) = \frac{1}{4} \| \v{x} \psi (t) \|_{L^2 (\R^d ; \C^2)}^2$ satisfies the virial identity
\begin{align}\label{eq:constantfield_xsquare_gauge_independent_pauli}
\ddot{g} (t) = 2 F_{\mr{P}} [\psi_0] + \mu d \frac{p - (1 + 4 / d)}{p+1} \| \psi (t) \|_{L^{p+1} (\R^d)}^{p+1} - B^2 \| \rho \psi (t) \|_{L^2 (\R^d ; \C^2)}^2 .
\end{align}
In particular, if $d = 2$ and $p = 3$, then (\ref{eq:constantfield_xsquare_gauge_independent_pauli}) becomes a second-order equation for $g$ that can be solved exactly:
\begin{align}\label{eq:constantfield_explicitsolution_pauli}
g(t) = \frac{F_{\mr{P}} [\psi_0]}{2 B^2} + \left( g_0 - \frac{F_{\mr{P}} [\psi_0]}{2 B^2} \right) \cos{(2Bt)} + \frac{\dot{g}_0}{2 B} \sin{(2Bt)}
\end{align}
where $g_0 = \frac{1}{4} \| \rho \psi_0 \|_{L^2 (\R^2 ; \C^2)}^2$ and $\dot{g}_0 = \re{ \langle \v{x} \psi_0 , (\v{p} + \v{A}) \psi_0 \rangle_{L^2(\R^2 ; \C^2)}}$.
\end{thm}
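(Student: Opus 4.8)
The plan is to follow the proofs of Lemma~\ref{lem:conservation_of_Fs} and Theorem~\ref{thm:blowup_NLMS} almost verbatim, the point being that in a uniform field the Pauli operator differs from the magnetic Schr\"odinger operator only by multiplication by the \emph{constant} matrix $B\sigma_3$, i.e. $[\gvsig\cdot(\v{p}+\v{A})]^2=(\v{p}+\v{A})^2+B\sigma_3$. First I would reduce to $\psi_0\in H^2_{\!\v{A}}(\R^d;\C^2)\cap\Sigma_{\mr P}$, the general case following from the continuous dependence in Theorem~\ref{thm:local_wellposed_Pauli} (persistence of $\Sigma_{\mr P}$-regularity being obtained as in \cite{ribeiro91}), and then differentiate $g(t)=\frac14\|\v{x}\psi(t)\|_{L^2(\R^d;\C^2)}^2$ twice along $i\partial_t\psi=((\v{p}+\v{A})^2+B\sigma_3+\mu|\psi|^{p-1})\psi$.

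The first key point is that $\sigma_3$ is a constant matrix and therefore commutes with every operator occurring in the virial computation of Lemma~\ref{lem:NLMS_Virial}: multiplication by $|\v{x}|^2$, the dilation $D$, multiplication by $\v{x}\cdot\v{A}$, and the scalar nonlinearity $|\psi|^{p-1}$. Consequently the $B\sigma_3$ term contributes nothing to $\dot g$ or $\ddot g$; since $(\v{p}+\v{A})^2$ acts componentwise, the proof of Lemma~\ref{lem:NLMS_Virial} carries over unchanged and $\ddot g$ is given by exactly (\ref{eq:xsquared_3_dimension}), now with all inner products and norms read in $L^2(\R^d;\C^2)$ and with $\gvpi=\v{p}+\v{A}$. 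The second key point is to rewrite the Pauli quantities in Schr\"odinger terms via $(\gvsig\cdot\v{a})(\gvsig\cdot\v{b})=\v{a}\cdot\v{b}+i\gvsig\cdot(\v{a}\times\v{b})$ (valid for noncommuting $\v{a},\v{b}$ once the operator order is kept). Applied to $[\gvsig\cdot\gvpi]^2$ this yields $E_{\mr P}[\psi]=E_{\mr S}[\psi]+B\langle\sigma_3\psi,\psi\rangle$, where $E_{\mr S}$ is the Schr\"odinger energy (\ref{def:NLMS_energy}) read on $\C^2$-valued fields; applied to $(\gvsig\cdot\v{x}_\perp)(\gvsig\cdot\gvpi)$, and using the adjoint relation for $x_1\pi_1+x_2\pi_2$ to take the real part, it yields $\re\langle\gvsig\cdot\v{x}_\perp\psi,\gvsig\cdot\gvpi\psi\rangle=\re\langle\v{x}_\perp\psi,\gvpi\psi\rangle+\langle\sigma_3\psi,\psi\rangle$ (the $\sigma_1,\sigma_2$ cross terms carry $\pi_3$ and have vanishing real part, and are absent when $d=2$). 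Substituting both identities into (\ref{def:blowup_functional_Pauli}), the two $B\langle\sigma_3\psi,\psi\rangle$ terms cancel and one obtains the clean identity $F_{\mr P}[\psi]=F_{\mr S}[\psi]$, i.e. the functional (\ref{def:blowup_functional_schr}) evaluated on $\C^2$-valued $\psi$.

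Granting this, Lemma~\ref{lem:conservation_of_Fp} reduces to two observations: $\langle\sigma_3\psi(t),\psi(t)\rangle$ is conserved (its derivative $\langle i[(\v{p}+\v{A})^2+B\sigma_3+\mu|\psi|^{p-1},\sigma_3]\psi,\psi\rangle$ vanishes term by term, $\sigma_3$ being constant and the other factors scalar), and the remaining piece $-B\re\langle\v{x}_\perp\psi,\gvpi\psi\rangle+\frac{B^2}{2}\|\rho\psi\|^2$ is conserved along the NLP flow by precisely the commutator computation of Lemma~\ref{lem:conservation_of_Fs}, the only change being the harmless $i[B\sigma_3,\v{x}_\perp\cdot\gvpi]=0$ and the nonlinearity dropping out via $\int\partial_\vartheta|\psi|^{p+1}=0$. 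The virial identity (\ref{eq:constantfield_xsquare_gauge_independent_pauli}) then follows by inserting $\|\gvpi\psi\|^2=F_{\mr S}[\psi(t)]-\frac{2\mu}{p+1}\|\psi\|_{L^{p+1}}^{p+1}+B\re\langle\v{x}_\perp\psi,\gvpi\psi\rangle-\frac{B^2}{2}\|\rho\psi\|^2$ into $\ddot g$, cancelling the $\re\langle\v{x}_\perp\psi,\gvpi\psi\rangle$ terms, combining the two $\|\psi\|_{L^{p+1}}^{p+1}$ coefficients into $\mu d\frac{p-(1+4/d)}{p+1}$, and replacing $F_{\mr S}[\psi(t)]$ by $F_{\mr P}[\psi_0]$. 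Finally, for $d=2$, $p=3$ that coefficient vanishes and $\|\rho\psi\|^2=\|\v{x}\psi\|^2=4g$, so $\ddot g+4B^2g=2F_{\mr P}[\psi_0]$: a constant-coefficient linear ODE whose solution with $g(0)=g_0$ and $\dot g(0)=\dot g_0=\re\langle\v{x}\psi_0,(\v{p}+\v{A})\psi_0\rangle$ (read off from $\dot g=\re\langle\v{x}\cdot\gvpi\psi,\psi\rangle$) is (\ref{eq:constantfield_explicitsolution_pauli}).

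The one step demanding genuine care, as opposed to a transcription of \S\ref{sub:virial}, is the bookkeeping of the Pauli cross terms: verifying that $(\gvsig\cdot\v{a})(\gvsig\cdot\v{b})=\v{a}\cdot\v{b}+i\gvsig\cdot(\v{a}\times\v{b})$ with noncommuting $\v{a},\v{b}$ produces, after taking real parts, exactly the $\langle\sigma_3\psi,\psi\rangle$ contributions that cancel the discrepancy $E_{\mr P}-E_{\mr S}$, and likewise that the cross term built from $\gvsig\cdot\nabla|\psi|^{p-1}$ in the conservation computation integrates to zero. Once $F_{\mr P}=F_{\mr S}$ is in hand the rest is identical to the scalar case, so I anticipate no further obstacle, including in $d=2$ where $\gvsig$ has only two components and the $\pi_3$-dependent terms drop out automatically.
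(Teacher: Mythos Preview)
Your proposal is correct and is a streamlined variant of the paper's argument. The paper proceeds by first re-deriving a Pauli virial identity (Lemma~\ref{lem:NLP_Virial}), expressing $\ddot g$ in terms of $T_{\mr P}[\psi]$ and $\re\langle \gvsig\cdot\v{x}_\perp\psi,\gvsig\cdot\gvpi\psi\rangle$, and then showing directly that $\frac{\dd}{\dd t}\re\langle\gvsig\cdot\v{x}_\perp\psi,\gvsig\cdot\gvpi\psi\rangle=\frac{B}{2}\frac{\dd}{\dd t}\|\rho\psi\|^2$; combined with conservation of $E_{\mr P}$ this gives conservation of $F_{\mr P}$, and the rewriting step is then identical to the Schr\"odinger case. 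You instead exploit from the outset that in a uniform field $[\gvsig\cdot\gvpi]^2=\gvpi^2+B\sigma_3$ with $\sigma_3$ a constant matrix commuting with $|\v{x}|^2$, $D$, $\v{x}\cdot\v{A}$, $\v{x}_\perp\cdot\gvpi$ and the scalar nonlinearity, so that the entire virial and conservation computation reduces verbatim to the Schr\"odinger one (Lemmas~\ref{lem:NLMS_Virial} and~\ref{lem:conservation_of_Fs}) read on $\C^2$-valued fields; the algebraic identity $F_{\mr P}[\psi]=F_{\mr S}[\psi]$, obtained by expanding $(\gvsig\cdot\v{x}_\perp)(\gvsig\cdot\gvpi)$ and cancelling the two $B\langle\sigma_3\psi,\psi\rangle$ contributions, then closes the argument. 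This buys you a genuine shortcut: you never need the separate Pauli virial computation of Lemma~\ref{lem:NLP_Virial}. One small remark: in your final paragraph the ``cross term built from $\gvsig\cdot\nabla|\psi|^{p-1}$'' belongs to the paper's direct Pauli-language computation rather than to your own reduction, where the nonlinearity is scalar throughout and only the Schr\"odinger contribution $\int\partial_\vartheta|\psi|^{p+1}=0$ appears; this is a cosmetic point and does not affect correctness.
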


\begin{cor}
Suppose $\mu < 0$, $1 + 4/d \leq p < 1 + 4/(d-2)$, $\psi_0 \in \Sigma_{\mr{P}}$, and $F_{\mr{P}} [\psi_0] < 0$. Then the corresponding solution to (\ref{eq:NLMS}) blows up in finite time. If $F_{\mr{P}} [\psi_0] = 0$, then blow-up occurs when $\dot{g} (0) = \re{ \langle \v{x} \psi_0 , (\v{p} + \v{A}) \psi_0 \rangle_{L^2 (\R^d)}} < 0$.
\end{cor}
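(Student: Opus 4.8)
The plan is to transcribe, almost verbatim, the reasoning surrounding Corollary~\ref{cor:blow_up_schr} for the magnetic Schr\"odinger case, with the virial identity (\ref{eq:constantfield_xsquare_gauge_independent_pauli}) of Theorem~\ref{thm:blowup_NLP} and the conserved functional $F_{\mr{P}}$ (Lemma~\ref{lem:conservation_of_Fp}) playing the roles there played by (\ref{eq:constantfield_xsquare_gauge_independent}) and $F_{\mr{S}}$. First I would note that, under the hypotheses $\mu<0$ and $1+4/d\le p<1+4/(d-2)$, both of the last two terms on the right of (\ref{eq:constantfield_xsquare_gauge_independent_pauli}) are nonpositive: the factor $p-(1+4/d)$ is $\ge0$ while $\mu<0$, and $-B^2\|\rho\psi(t)\|_{L^2(\R^d;\C^2)}^2\le0$. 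Hence $\ddot g(t)\le 2F_{\mr{P}}[\psi_0]$ on $[0,T_*)$, where $g(t)=\tfrac14\|\v{x}\psi(t)\|_{L^2(\R^d;\C^2)}^2$ is nonnegative and, because $\psi\in C([0,T_*);\Sigma_{\mr{P}})$, finite on $[0,T_*)$.

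Next I would close the two cases. If $F_{\mr{P}}[\psi_0]<0$, integrating the inequality twice yields $g(t)\le g(0)+\dot g(0)\,t+F_{\mr{P}}[\psi_0]\,t^2$, a downward parabola that is eventually strictly negative; since $g\ge0$ this is impossible when $T_*=\infty$, so $T_*<\infty$, i.e. the solution blows up in finite time. If $F_{\mr{P}}[\psi_0]=0$ and $\dot g(0)=\re\langle\v{x}\psi_0,(\v{p}+\v{A})\psi_0\rangle_{L^2(\R^d)}<0$, then $\ddot g\le0$ gives $\dot g(t)\le\dot g(0)<0$ and hence $g(t)\le g(0)+\dot g(0)\,t$, again eventually negative, and the same contradiction applies. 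In both cases $\psi_0\ne0$ (otherwise $F_{\mr{P}}[\psi_0]=0$ and $\dot g(0)=0$), so by the conservation of the $L^2$-norm in Theorem~\ref{thm:local_wellposed_Pauli}(4) the function $g$ cannot vanish at an interior time $t_*\in[0,T_*)$ without forcing $\psi(t_*)=0$ a.e., a contradiction; this is what makes the conclusion $T_*<\infty$, rather than $g$ actually touching zero, the correct formulation of blow-up.

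The one place where care is needed relative to the magnetic Schr\"odinger case is the step phrased, as for (\ref{eq:NLS}), as ``$g$ hitting zero forces divergence of the kinetic energy'', since the diamagnetic inequality (\ref{eq:diamagnetic}) fails for the Pauli operator and one cannot directly bound $\|\nabla|\psi|\|_{L^2}$ by $\|(\v{p}+\v{A})\psi\|_{L^2}$. This is harmless: one may either argue purely from $g\ge0$ together with the $L^2$-conservation, as above, or, keeping the uncertainty-principle phrasing, combine $\|\psi_0\|_{L^2}^2=\|\psi(t)\|_{L^2}^2\le\tfrac2d\|\v{x}\psi(t)\|_{L^2}\|\nabla\psi(t)\|_{L^2}$ with $\|(\v{p}+\v{A})\psi(t)\|_{L^2}\ge\|\nabla\psi(t)\|_{L^2}-\|\v{A}\psi(t)\|_{L^2}$ and the fact that in the symmetric gauge $\|\v{A}\psi(t)\|_{L^2}=\tfrac{|B|}{2}\|\rho\psi(t)\|_{L^2}\le\tfrac{|B|}{2}\|\v{x}\psi(t)\|_{L^2}$; an alternative route to the same conclusion uses $[\gvsig\cdot(\v{p}+\v{A})]^2=(\v{p}+\v{A})^2+B\sigma_3$ and $|\langle\sigma_3\psi,\psi\rangle|\le\|\psi_0\|_{L^2}^2$ to compare $\|(\v{p}+\v{A})\psi(t)\|_{L^2}^2$ with $T_{\mr{P}}[\psi(t)]$, so that divergence of one is divergence of the other and the blow-up alternative of Theorem~\ref{thm:local_wellposed_Pauli} applies. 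Beyond this there is no genuine obstacle; the corollary is the Pauli transcription of Corollary~\ref{cor:blow_up_schr}.
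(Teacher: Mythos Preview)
Your proposal is correct and follows essentially the same Glassey-type argument the paper uses (and which is spelled out just before Corollary~\ref{cor:blow_up_schr}): from (\ref{eq:constantfield_xsquare_gauge_independent_pauli}) one obtains $\ddot g\le 2F_{\mr{P}}[\psi_0]$, and concavity together with $g\ge0$ forces $T_*<\infty$. Your extra paragraph addressing the failure of the diamagnetic inequality for the Pauli operator is more explicit than the paper, which simply declares the argument ``nearly identical'' to the Schr\"odinger case; both of your suggested workarounds (direct $L^2$-conservation plus $g\ge0$, or the identity $[\gvsig\cdot(\v{p}+\v{A})]^2=(\v{p}+\v{A})^2+B\sigma_3$) are valid and make the transcription honest.
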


As with the NLMS equation, both Theorem \ref{thm:blowup_NLP} and its Corollary are proved by deriving a virial identity for the second time derivative of $\langle \psi ,  |\v{x}|^2 \psi \rangle_{L^2(\R^d ; \C^2)}$. As before, we first treat the case of a general, time-independent, external magnetic field (not necessarily a uniform field).  We record the virial identity for the NLP equation as the following Lemma. 
\begin{lem}\label{lem:NLP_Virial}
Let $g (t) = \frac{1}{4} \| \v{x} \psi (t) \|_{L^2 (\R^d ; \C^2)}^2$. Then, for any solution $\psi$ to the NLP equation (\ref{eq:NLP}) with initial data $\psi_0 \in H_{\!\v{A}}^2 (\R^d ; \C^2)$, the following virial identities holds: In dimension $d = 3$,
\begin{align*}
\ddot{g} & = 2 T_{\mr{P}} [\psi] + 3 \mu \frac{p-1}{p+1} \| \psi \|_{L^{p+1} (\R^2 ; \C^2)}^{p+1} \\
& \hspace{3cm} + 2 \re{ \langle \gv{\sigma} \cdot (\v{x} \wedge \v{B}) \psi , \gvsig \cdot (\v{p} + \v{A}) \psi \rangle_{L^2(\R^2 ; \C^2)}} , \numberthis \label{eq:xsquared_3_dimension_Pauli} 
\end{align*}
and, assuming $\v{B}$ is aligned with the $x_3$-axis, in dimension $d = 2$,
\begin{align*}
\ddot{g} & = 2 T_{\mr{P}} [\psi] + 2 \mu \frac{p-1}{p+1} \| \psi \|_{L^{p+1} (\R^2 ; \C^2)}^{p+1} \\
& \hspace{3cm} - 2 \re{ \langle B \gv{\sigma} \cdot \v{x}_{\perp} \psi , \gvsig \cdot (\v{p} + \v{A}) \psi \rangle_{L^2(\R^2 ; \C^2)}}. \numberthis \label{eq:xsquared_2_dimension_Pauli}
\end{align*}
\end{lem}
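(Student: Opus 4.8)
The plan is to compute $\ddot g$ for the NLP equation by the same commutator method used in the proof of Lemma \ref{lem:NLMS_Virial}, tracking carefully how the Pauli coupling $[\gvsig\cdot\gvpi]^2 = \gvpi^2 + \gvsig\cdot\v{B}$ modifies each term. First I would set $H_{\mr{P}} = [\gvsig\cdot\gvpi]^2 + \mu|\psi|^{p-1}$ and note that, since $\psi_0\in H^2_{\!\v{A}}(\R^d;\C^2)$, Theorem \ref{thm:local_wellposed_Pauli} guarantees $\psi\in C([0,T_*),H^2_{\!\v{A}})\cap C^1([0,T_*),L^2)$, so all differentiations under the inner product are justified. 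Following the earlier proof, with $G(\v{x})=|\v{x}|^2$ one gets $\dot g = \re\langle \v{x}\cdot\gvpi\psi,\psi\rangle = \langle(D+\v{x}\cdot\v{A})\psi,\psi\rangle$ exactly as before, because the Pauli correction $\gvsig\cdot\v{B}$ is a multiplication operator (matrix-valued, but $\v{x}$-independent in the uniform case, and in general a function of $\v{x}$ only) that commutes with $|\v{x}|^2$, so it drops out of $\dot g$. Then $\ddot g = \langle i[H_{\mr{P}},D+\v{x}\cdot\v{A}]\psi,\psi\rangle$.

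The computation of $i[H_{\mr{P}},D]$ proceeds through the dilation conjugation $e^{-i\theta D}H_{\mr{P}}e^{i\theta D}$ as in the excerpt. The new feature is the term $e^{-i\theta D}(\gvsig\cdot\v{B}(\v{x}))e^{i\theta D} = \gvsig\cdot\v{B}(e^{-\theta}\v{x})$, so differentiating at $\theta=0$ contributes $-\gvsig\cdot(\v{x}\cdot\nabla)\v{B}$, while the kinetic part gives $2[\gvsig\cdot\gvpi]^2 + \tfrac{\dd}{\dd\theta}[\gvsig\cdot\gvpi_\theta]^2|_{\theta=0}$, absent the $\gvsig\cdot\v{B}$ piece of $[\gvsig\cdot\gvpi]^2$; here I must be careful that the $\theta$-derivative of the vector-potential part is organized so as to cancel against the $\v{x}\cdot\v{A}$ commutator. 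For $i[H_{\mr{P}},\v{x}\cdot\v{A}]$ only the operator $[\gvsig\cdot\gvpi]^2$ contributes; I would expand $[\gvsig\cdot\gvpi]^2 = \gvpi^2 + \gvsig\cdot\v{B}$ and note $[\gvsig\cdot\v{B},\v{x}\cdot\v{A}]=0$ (both multiplication operators), so this commutator equals $i[\gvpi^2,\v{x}\cdot\v{A}]$, which by equation (\ref{eq:varia_3}) is $-\tfrac{\dd}{\dd\theta}|_0\gvpi_\theta^2 - (\gvpi\cdot\v{B}\v{x}+\v{B}\v{x}\cdot\gvpi)$. Assembling, the $\tfrac{\dd}{\dd\theta}$-terms cancel and the magnetic remainder reorganizes. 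The key algebraic identity I expect to need is the rearrangement $\gvpi\cdot\v{B}\v{x}+\v{B}\v{x}\cdot\gvpi$ together with the $-\gvsig\cdot(\v{x}\cdot\nabla)\v{B}$ term into the single expression $-2\re\langle\gvsig\cdot(\v{x}\wedge\v{B})\psi,\gvsig\cdot\gvpi\psi\rangle$ in $d=3$ (resp.\ $+2\re\langle B\,\gvsig\cdot\v{x}_\perp\psi,\gvsig\cdot\gvpi\psi\rangle$, with the sign as stated, in $d=2$); this uses the Pauli algebra $\sigma^j\sigma^k = \delta_{jk}I + i\epsilon_{jk\ell}\sigma^\ell$ to convert a scalar bilinear form in $\gvpi$ against $\v{B}\v{x}$ into a $\gvsig$-weighted form, exploiting that cross terms $\gvsig\cdot\v{u}\,\gvsig\cdot\v{v} = \v{u}\cdot\v{v} + i\gvsig\cdot(\v{u}\times\v{v})$. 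The non-linear term is handled exactly as in Lemma \ref{lem:NLMS_Virial}: $-\mu(\v{x}\cdot\nabla)|\psi|^{p-1}$ integrated against $|\psi|^2$ yields $\mu d\frac{p-1}{p+1}\|\psi\|_{L^{p+1}}^{p+1}$ after an integration by parts, with $d=3$ or $d=2$ giving the respective coefficients $3$ and $2$ in (\ref{eq:xsquared_3_dimension_Pauli}), (\ref{eq:xsquared_2_dimension_Pauli}).

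The main obstacle I anticipate is the bookkeeping in the spin–magnetic cross term: one must verify that the contribution $-\gvsig\cdot(\v{x}\cdot\nabla)\v{B}$ coming from dilating $\gvsig\cdot\v{B}$ combines correctly with the piece coming from $i[\gvpi^2,\v{x}\cdot\v{A}]$ and with the cross terms generated when $[\gvsig\cdot\gvpi]^2$ is expanded, so that everything collapses to the stated $\gvsig\cdot(\v{x}\wedge\v{B})$ form without leftover $\v{B}$-independent residue or sign errors; in the uniform case $\v{B}$ is constant so $(\v{x}\cdot\nabla)\v{B}=0$ and this simplifies, but for the general time-independent $\v{B}$ claimed in Lemma \ref{lem:NLP_Virial} the derivative terms genuinely interact. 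A secondary subtlety is simply that all inner products are now $\C^2$-valued, so the Pauli matrices do not commute past each other, and one must respect operator ordering throughout; taking real parts at the end absorbs the anti-Hermitian ambiguities. Once the $d=3$ identity (\ref{eq:xsquared_3_dimension_Pauli}) is established, the $d=2$ identity (\ref{eq:xsquared_2_dimension_Pauli}) follows by specializing $\v{B}=(0,0,B)$ and observing $\v{x}\wedge\v{B}$ reduces (up to sign and the planar projection) to $-B\v{x}_\perp$, matching the form of (\ref{eq:xsquared_3_dimension}) in the scalar case.
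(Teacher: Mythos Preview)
Your plan is correct and follows the paper's proof closely: both compute $\ddot g = \langle i[H_{\mr P},D+\v{x}\cdot\v{A}]\psi,\psi\rangle$ via dilation conjugation, both reduce $i[H_{\mr P},\v{x}\cdot\v{A}]$ to the scalar commutator $i[\gvpi^2,\v{x}\cdot\v{A}]$ from Lemma~\ref{lem:NLMS_Virial}, and both then recombine the scalar Lorentz term $2\re\langle(\v{x}\wedge\v{B})\psi,\gvpi\psi\rangle$ with the spin contributions into the single expression $2\re\langle\gvsig\cdot(\v{x}\wedge\v{B})\psi,\gvsig\cdot\gvpi\psi\rangle$.

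The only substantive difference is in how that last recombination is carried out. You propose to use the Pauli identity $\gvsig\cdot\v{u}\,\gvsig\cdot\v{v}=\v{u}\cdot\v{v}+i\gvsig\cdot(\v{u}\times\v{v})$ directly, which indeed works: symmetrizing gives $\gvsig\cdot\v{u}\,\gvsig\cdot\v{v}+\gvsig\cdot\v{v}\,\gvsig\cdot\v{u}=\{u_j,v_j\}+i\epsilon_{jk\ell}\sigma_\ell[u_j,v_k]$, and with $\v{u}=\v{x}\wedge\v{B}$, $\v{v}=\gvpi$ the commutator piece produces exactly $-2\gvsig\cdot\v{B}-\gvsig\cdot(\v{x}\cdot\nabla)\v{B}$ (using $\diver\v{B}=0$). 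The paper instead obtains the same identity by an integration by parts, computing $\int(\v{x}\wedge\v{B})\cdot\curl\langle\psi,\gvsig\psi\rangle_{\C^2}=\int\curl(\v{x}\wedge\v{B})\cdot\langle\psi,\gvsig\psi\rangle_{\C^2}$ and using $\curl(\v{x}\wedge\v{B})=-2\v{B}-(\v{x}\cdot\nabla)\v{B}$. Your route is arguably more direct; the paper's has the virtue of making the role of $\diver\v{B}=0$ geometrically transparent. One small point to watch in your bookkeeping: the $-2\langle\gvsig\cdot\v{B}\psi,\psi\rangle$ contribution you need does not come from dilating $\gvsig\cdot\v{B}$ (which only gives $-\gvsig\cdot(\v{x}\cdot\nabla)\v{B}$), but from rewriting $2\|\gvpi\psi\|^2$ as $2T_{\mr P}[\psi]-2\langle\gvsig\cdot\v{B}\psi,\psi\rangle$; your phrase ``cross terms generated when $[\gvsig\cdot\gvpi]^2$ is expanded'' covers this, but it is worth making explicit.
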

\begin{proof}
We denote $H_{\mr{P}} =  [ \gvsig \cdot \gvpi]^2 + \mu | \psi|^{p-1}$ where $\gvpi = \v{p} + \v{A}$. For simplicity we focus on the $d = 3$ case, as the $d = 2$ case will be nearly identical. Computing first time derivative of $g$ is essentially the same as the first time derivative of $g$ in the proof of Lemma \ref{lem:NLMS_Virial}. For the second time derivative we find
\begin{align}\label{eq:varia_1_P}
\ddot{g} = \langle i [ H_{\mr{P}} , (D + \v{x} \cdot \v{A}) ] \psi , \psi \rangle_{L^2(\R^3 ; \C^2)} ,
\end{align}
where $H_{\mr{P}} = [\gvsig \cdot \gvpi]^2 + \mu |\psi|^{p-1} \psi$. We worked out the commutator $[H_{\mr{P}},D]$ in the same way as in the proof of Lemma \ref{lem:NLMS_Virial}. We find
\begin{align}\label{eq:varia_2_P}
i [H_{\mr{P}} , D]  = 2 [\gvsig \cdot \gvpi]^2 - \mu (\v{x} \cdot \nabla ) |\psi|^{p-1} + \frac{\dd}{\dd \theta} [\gvsig \cdot \gvpi_{\theta}]^2 \Big|_{\theta = 0} .
\end{align}
where $\gvpi_{\theta} = \v{p} + \v{A}_{\theta}$.

Likewise, for the computation of
\begin{align*}
\langle i [ H_{\mr{P}} , \v{x} \cdot \v{A} ] \psi , \psi \rangle_{L^2 (\R^3 ; \C^2)} = \langle i [ [ \gvsig \cdot \gvpi ]^2 , \v{x} \cdot \v{A} ] \psi , \psi \rangle_{L^2(\R^3 ; \C^2)}  = \langle i [ \gvpi^2 , \v{x} \cdot \v{A} ] \psi , \psi \rangle_{L^2(\R^3 ; \C^2)}
\end{align*}
we refer to the proof of Lemma \ref{lem:NLMS_Virial}. In total we arrive at
\begin{align*}
g & = 2 T_{\mr{P}} [\psi , \v{A}] + 3 \mu \frac{p-1}{p+1} \| \psi \|_{L^{p+1} (\R^d)}^{p+1} \\
& \hspace{5mm} + \frac{\dd }{\dd \theta} \langle \gvsig \cdot \v{B}_{\theta} \psi , \psi \rangle_{L^2(\R^d)} + 2 \re{ \langle \v{x} \wedge \v{B} \psi , \gvpi \psi \rangle_{L^2 (\R^d)}} . \numberthis \label{eq:varia_3_P}
\end{align*}
where $\v{B}_{\theta} (\v{x}) = e^{-2 \theta} \v{B} (e^{-\theta} \v{x})$. We may simplify the expression (\ref{eq:varia_3_P}) by observing the following calculation:
\begin{align*}
\int_{\R^3} (\v{x} \wedge \v{B}) \cdot \curl{\langle \psi , \gvsig \psi \rangle_{\C^2}} \dd \v{x} & = \int_{\R^3} \curl{(\v{x} \wedge \v{B})} \cdot \langle \psi , \gvsig \psi \rangle_{\C^2} \dd \v{x} \\
& = \int_{\R^3} (- 2 \v{B} - (\v{x} \cdot \nabla) \v{B}) \cdot \langle \psi , \gvsig \psi \rangle_{L^2(\R^3 ; \C^2)} \dd \v{x} \\
& = \frac{\dd }{\dd \theta} \Big|_{\theta = 0} \langle \gvsig \cdot \v{B}_{\theta} \psi , \psi \rangle_{L^2(\R^3 ; \C^2)} .
\end{align*}
Combining the previous observation with (\ref{eq:varia_3_P}) we arrive at (\ref{eq:xsquared_3_dimension_Pauli}).
\end{proof}

\begin{proof}[Proof of Lemma \ref{lem:conservation_of_Fp} and Theorem \ref{thm:blowup_NLP}]
The proof of Lemma \ref{lem:conservation_of_Fp} and, hence, Theorem \ref{thm:blowup_NLP} is nearly identical to the proof for the NLMS equation case. In particular, we have the identity
\begin{align*}
\frac{\dd }{\dd t} \re{ \langle \gvsig \cdot \v{x}_{\perp} \psi , \gvsig \cdot (\v{p} + \v{A}) \psi \rangle_{L^2(\R^d)}} = \frac{B}{2} \frac{\dd }{\dd t} \| \rho \psi \|_{L^2 (\R^d)}^2 ,
\end{align*}
which upon integration in time together with (\ref{eq:xsquared_2_dimension_Pauli})-(\ref{eq:xsquared_3_dimension_Pauli}) yields the desired result.
\end{proof}

\end{document}